\documentclass[11pt]{amsart}
\usepackage{amstext,amssymb,amsmath,amsbsy,dsfont,tikz}

 \usepackage[left=2.65cm,right=2.65cm,top=3.2cm,bottom=3.2cm]{geometry}

\usepackage{amsmath,amssymb,latexsym,dsfont}
\usepackage[small]{caption}
\usepackage{graphicx,color,mathrsfs,tikz}
\usepackage{subfigure,color}
\usepackage{cite}
\usepackage[colorlinks=true,urlcolor=blue,
citecolor=red,linkcolor=blue,linktocpage,pdfpagelabels,
bookmarksnumbered,bookmarksopen]{hyperref}
\usepackage[italian,english]{babel}
\usepackage{units}
\usepackage{enumitem}

\usepackage{hyperref}
\usepackage{cleveref}

\usepackage{tikz}
\usetikzlibrary{intersections}

\usepackage{amscd}
\usepackage{amsfonts}
\usepackage{indentfirst}
\usepackage{verbatim}
\usepackage{amsmath}
\usepackage{amsthm}
\usepackage{enumerate}
\usepackage{graphicx}
\usepackage{color}
\usepackage[OT1]{fontenc}
\usepackage[latin1]{inputenc}
\usepackage[english]{babel}
\usepackage{amssymb,esint}

\newtheorem{theorem}{Theorem}[section]
\newtheorem{lemma}{Lemma}[section]
\newtheorem{proposition}{Proposition}[section]

\newtheorem{remark}{Remark}[section]

\newtheorem{definition}{Definition}[section]
\setcounter{equation}{0}

\usepackage{tikz}
\usetikzlibrary{intersections}



\newcommand{\supp}{\operatorname{supp}}


\newcommand{\eps}{\varepsilon}

\newcommand{\mR}{\mathbb{R}}

\newcommand{\mc}{\mathrm{c}}

\newcommand{\hxi}{\widetilde \xi}

\newcommand{\hrho}{\widetilde \rho}

\newcommand{\be}{\begin{equation}}
\newcommand{\ee}{\end{equation}}

\newcommand{\ba}{\begin{array}}
\newcommand{\ea}{\end{array}}

\newcommand{\bem}{\begin{multline}}
\newcommand{\eem}{\end{multline}}

\numberwithin{equation}{section}

\title[]{Exponential decay of solutions of damped wave equations in one dimensional space in  the $L^p$ framework for various boundary conditions}

\author[Y. Chitour]{Yacine Chitour}
        \address[Y. Chitour]{Laboratoire des signaux et syst\`emes, \newline\indent
        Universit\'e Paris Saclay, \newline\indent
        France}
\email{yacine.chitour@l2s.centralesupelc.fr }

\author[H.-M. Nguyen]{Hoai-Minh Nguyen}
\address[H.-M. Nguyen]{Laboratoire Jacques Louis Lions, \newline\indent
Sorbonne Universit\'e\newline\indent
Paris, France}
\email{hoai-minh.nguyen@sorbonne-universite.fr}

\begin{document}

\maketitle

\begin{abstract}
We establish the decay of the solutions of the damped wave equations in one dimensional space for the Dirichlet, Neumann, and dynamic boundary conditions where the damping coefficient is a function of space and time. The analysis is based on the study of the corresponding hyperbolic systems associated with the Riemann invariants. The key ingredient in the study of these systems is the use of the internal dissipation energy  to estimate  the difference of solutions with their mean values in an average sense.  
\end{abstract}

\tableofcontents

\section{Introduction} 
This paper is devoted to 
the decay of solution of the damped wave equations in one dimensional space in  the $L^p$-framework for $1 < p < + \infty$ for various boundary conditions where the damping depends  on space and time. More precisely, we consider the damped wave equation 
\be \label{Sys-W}
\left\{\ba{cl}
\partial_{tt} u - \partial_{xx} u + a \partial_t u = 0 & \mbox{ in } \mR_+ \times (0, 1), \\[6pt]
u(0, \cdot) = u_0, \quad \partial_t u (0, \cdot) = u_1 & \mbox{ on } (0, 1), 
\ea \right. 
\ee
equipped with one of the following boundary conditions: 
\be \label{Dirichlet}
\mbox{Dirichlet boundary condition: $u(t, 0) = u(t, 1) = 0, \mbox{ for } t\geq 0$}, 
\ee
\be\label{Neumann}
\mbox{Neumann boundary condition: $\partial_x u(t, 0) = \partial_x u(t, 1) = 0,   \mbox{ for } t \geq 0$}, 
\ee
and, for $\kappa > 0$, 
\be\label{Dynamic}
\mbox{dynamic boundary condition: $\partial_x u(t, 0) - \kappa \partial_t u(t, 0) = \partial_x u(t, 1) +  \kappa \partial_t u(t, 1)= 0,   \mbox{ for } t \geq 0$}.  
\ee
Here $u_0 \in W^{1, p}(0, 1)$ (with $u_0(0) = u_0(1) = 0$, i.e., $u_0 \in W^{1, p}_0(0, 1)$, in the case where the Dirichlet boundary condition is considered),  and  $u_1 \in L^p(0, 1)$ are the initial conditions. Moreover, $a \in L^\infty \big(\mR_+ \times (0, 1) \big)$ is assumed to verify the following hypothesis: 
\be\label{eq:hyp-a}
a\geq 0,\hbox{ and }\exists \lambda,\varepsilon_0>0,\ (x_0-\varepsilon_0,x_0+\varepsilon_0)\subset (0,1) \mbox{ such that }
a\geq \lambda\hbox{ on }\mathbb{R}_+\times (x_0-\varepsilon_0,x_0+\varepsilon_0),
\ee
i.e, $a$ is non-negative and $a (t, x) \ge \lambda > 0$ for $t \ge 0$ and for $x$ in some open subset of $(0, 1)$. The region where $a>0$ represents the region in which the damping term is active.

\medskip

The decay of the solutions of \eqref{Sys-W} equipped with either \eqref{Dirichlet}, or \eqref{Neumann}, or \eqref{Dynamic} has been extensively investigated  in the case where $a$ is independent of $t$, i.e., $a(t, x) = a(x)$ and mainly in the $L^2$-framework, i.e. within an Hilbertian setting. In this case, concerning the Dirichlet boundary condition, under the additional geometric multiplier condition on $a$, by the multiplier method, see e.g., \cite{Lions88-VolI,Komornik94}, one can prove that the solution decays exponentially, i.e., there exist positive constants $C$ and $\gamma$ independent of $u$ such that 
\be\label{Decay-W}
\| \partial_t u(t, \cdot) \|_{L^2(0, 1)} + \| \partial_x u(t, \cdot) \|_{L^2(0, 1)}  \le C e^{-\gamma t} \Big(\| \partial_t u(0, \cdot) \|_{L^2(0, 1)} + \| \partial_x u(0, \cdot) \|_{L^2(0, 1)} \Big),\ t\geq 0.
\ee
The assumption that $a$ satisfies the geometric multiplier condition is equivalent to the requirement that $a(x) \ge \lambda > 0$ on some neighbourhood of $0$ or $1$. Based on more sophisticate arguments in the seminal work of Bardos, Lebeau, and Rauch on the controllability of the wave equation \cite{BLR92}, Lebeau \cite{Lebeau96} showed that \eqref{Decay-W} also holds {\it without} the  geometric multiplier condition on $a$, see also the work of Rauch and Taylor \cite{RT74}.  
When the damping coefficient $a$ is also time-dependent, similar results have been obtained recently  by Le Rousseau et al. in \cite{LLTT17}.  It is worth noticing that strong stabilization, i.e., the energy decay to zero for each trajectory, has been established previously using  LaSalle's invariance argument  \cite{Dafermos78,Haraux78}.  The analysis of  the nonlinear setting associated with \eqref{Sys-W} can be found in \cite{HZ88, Zuazua90,Martinez99,MV00,CMP20} and the references therein. Similar results holds for the Neumann boundary condition \cite{Zuazua90, BLR92, Martinez99, LLTT17}.  Concerning the dynamic boundary condition without interior damping effect, i.e., $a \equiv 0$, the analysis for $L^2$-framework was previously initiated  by Quinn and Russell \cite{QR77}. They proved that  the energy exponentially decays in $L^2$-framework in one dimensional space. The exponential decay for higher dimensional space was proved by Lagnese  \cite{Lagnese83} using the multiplier technique (see also \cite{QR77}). The decay hence was established for the geometric multiplier condition and this technique was later extended in \cite{liu1997}, see also \cite{alabau2012} for a nice account on these issues.  

Much less is known about the asymptotic stability of  \eqref{Sys-W} equipped with either \eqref{Dirichlet}, or \eqref{Neumann}, or \eqref{Dynamic} in $L^p$-framework. This is probably due to the fact that for linear wave equations considered in domains of $\mR^d$ with $d \ge 2$ is not a well defined bounded operator in general in $L^p$ framework with $p  \neq 2$, a result due to Peral \cite{Peral80}.  As far as we know, the only work concerning 
exponential decay in the $L^p$-framework  is due to Kafnemer et al. \cite{KMJC22},  where the Dirichlet boundary condition was considered. For the damping coefficient $a$ being time-independent, they showed that the decay holds under the additional geometric multiplier condition on $a$ for $1 < p < + \infty$. Their analysis is via the multiplier technique involving  various non-linear test functions. In the case of zero damping and with a dynamic boundary condition, previous results have been obtained in \cite{CMM2021} where the problem has been reduced to the study of a discrete time dynamical system over appropriate functional spaces.

\medskip 
The goal of this paper is to give a unified approach to  deal with all the boundary considered in \eqref{Dirichlet}, \eqref{Neumann},  and \eqref{Dynamic} in the $L^p$-framework for $1 < p < + \infty$ under the condition \eqref{eq:hyp-a}. Our results thus hold even in the case where $a$ is a function of time and space. The analysis is based on the study of the corresponding hyperbolic systems associated with the Riemann invariants for which new insights are required. 

\medskip 
Concerning the Dirichlet boundary condition, we obtain the following result.

\begin{theorem}\label{thm-W} Let $1< p < + \infty$, $\eps_0 > 0$, $\lambda > 0$,  and let $a \in L^\infty \big(\mR_+ \times (0, 1) \big)$ be such that $a \ge 0$ and $a \ge \lambda > 0$ in $\mR_+ \times (x_0 - \eps_0, x_0 + \eps_0) \subset \mR_+ \times (0, 1)$ for some $x_0 \in (0, 1)$. Then there exist positive constants $C$ and $\gamma$ depending only on $p$, $\| a\|_{L^\infty\big(\mR_+ \times (0, 1) \big)}$, $\eps_0$, and $\lambda$  such that for all $u_0 \in W^{1, p}_0(0, 1)$ and $u_1 \in L^p(0, 1)$,  the unique weak solution $u \in C([0, + \infty); W^{1, p}_0(0, 1)) \cap C^1([0, + \infty); L^p(0, 1))$ of \eqref{Sys-W} and \eqref{Dirichlet} satisfies 
 \be\label{Decay-W-p}
\| \partial_t u(t, \cdot) \|_{L^p(0, 1)} + \| \partial_x u(t, \cdot) \|_{L^p(0, 1)} \le C e^{-\gamma t} \Big(\| u_1 \|_{L^p(0, 1)} + \| \partial_x u_0 \|_{L^p(0, 1)} \Big),\ t\geq 0.
\ee
\end{theorem}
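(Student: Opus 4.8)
The plan is to reduce the second-order wave equation to a $2\times2$ hyperbolic system for the Riemann invariants and then prove exponential decay for that system. Set $\alpha = \partial_t u + \partial_x u$ and $\beta = \partial_t u - \partial_x u$; then $\alpha$ satisfies a transport equation along the characteristic $x - t = \mathrm{const}$ and $\beta$ along $x + t = \mathrm{const}$, with coupling only through the damping term, namely $\partial_t \alpha - \partial_x \alpha = -\tfrac{a}{2}(\alpha + \beta)$ and $\partial_t \beta + \partial_x \beta = -\tfrac{a}{2}(\alpha+\beta)$. The Dirichlet boundary conditions translate into reflection conditions at $x=0$ and $x=1$: since $u(t,0)=u(t,1)=0$ forces $\partial_t u = 0$ there, we get $\alpha(t,0) = -\beta(t,0)$ and $\alpha(t,1) = -\beta(t,1)$. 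The claimed estimate \eqref{Decay-W-p} is equivalent, up to constants depending only on $p$, to showing $\|\alpha(t,\cdot)\|_{L^p} + \|\beta(t,\cdot)\|_{L^p} \le C e^{-\gamma t}\big(\|\alpha(0,\cdot)\|_{L^p} + \|\beta(0,\cdot)\|_{L^p}\big)$, since $\partial_t u = (\alpha+\beta)/2$ and $\partial_x u = (\alpha - \beta)/2$.

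Next I would set up the energy-type functional in $L^p$. Because $p \neq 2$, the natural quantity to track is $\int_0^1 (|\alpha|^p + |\beta|^p)\,dx$; along characteristics, in the absence of damping this quantity is conserved (the boundary reflections are isometric, merely swapping $|\alpha|^p \leftrightarrow |\beta|^p$ at the endpoints), and the damping term makes it nonincreasing. Concretely, multiplying the $\alpha$-equation by $|\alpha|^{p-2}\alpha$ and the $\beta$-equation by $|\beta|^{p-2}\beta$, integrating over $(0,1)$, and using the boundary identities, one obtains a dissipation identity of the form $\frac{d}{dt}\int_0^1(|\alpha|^p+|\beta|^p) = -\tfrac{p}{2}\int_0^1 a\,(\alpha+\beta)\,(|\alpha|^{p-2}\alpha + |\beta|^{p-2}\beta)\,dx$, and the right-hand side is $\le 0$ but degenerate (it only sees the region $\{a \ge \lambda\}$, and it vanishes when $\alpha = -\beta$). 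The core of the argument is therefore to upgrade this to a genuine decay: over a fixed time window $[kT, (k+1)T]$ with $T$ of order $1$ (comparable to the time for characteristics to cross $(0,1)$ and to visit the damping region $(x_0-\eps_0, x_0+\eps_0)$), one shows an inequality of the form $\int_0^1(|\alpha|^p+|\beta|^p)(T) \le (1 - \delta)\int_0^1(|\alpha|^p+|\beta|^p)(0)$ for some $\delta = \delta(p,\|a\|_\infty,\eps_0,\lambda) \in (0,1)$, and then iterates. This is exactly the step the abstract advertises: "the use of the internal dissipation energy to estimate the difference of solutions with their mean values in an average sense."

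The main obstacle — and the technical heart — is proving that window contraction. One must show that if the total energy at time $0$ is not small, then a definite fraction of it is dissipated by time $T$. The mechanism: if $\int_0^T \int_{x_0-\eps_0}^{x_0+\eps_0} a\,|\alpha+\beta|^p \lesssim \eta$ is small, then along every characteristic one can propagate, using Grönwall on the (mild) characteristic ODEs, that $\alpha$ is nearly constant in time and $\beta$ is nearly constant in time; combined with the reflection conditions $\alpha = -\beta$ at the endpoints this forces $\alpha + \beta$ to be small throughout the strip $[0,T]\times(0,1)$ (because every point in the domain lies on a forward characteristic that either hits a boundary or passes through the damping strip within time $T$), hence $\partial_t u$ is small, hence $\alpha$ and $\beta$ are individually close to functions of $x\mp t$ that are actually close to each other; a covering/averaging argument over the characteristic foliation then shows the energy at time $T$ is controlled by $\eta$ plus the energy actually dissipated — forcing the contraction. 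Quantifying "every point is reached" and keeping all constants uniform in time (crucially, independent of $k$, which is where time-dependence of $a$ must be handled: the hypothesis $a \ge \lambda$ on $\mathbb{R}_+ \times (x_0-\eps_0,x_0+\eps_0)$ for all $t$ is what makes the window estimate translation-invariant in $t$) is the delicate bookkeeping. Once the uniform contraction $\int_0^1(|\alpha|^p+|\beta|^p)((k+1)T) \le (1-\delta)\int_0^1(|\alpha|^p+|\beta|^p)(kT)$ is in hand, \eqref{Decay-W-p} follows with $\gamma = -\tfrac{1}{pT}\log(1-\delta)$ and $C$ absorbing the behaviour on the initial window, together with the equivalence between $\|\partial_t u\|_{L^p} + \|\partial_x u\|_{L^p}$ and $\|\alpha\|_{L^p} + \|\beta\|_{L^p}$; the density of smooth data justifies all the formal computations for general $u_0 \in W^{1,p}_0$, $u_1 \in L^p$.
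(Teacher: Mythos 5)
Your reduction to the Riemann invariants and the $L^p$ multiplier identity is exactly the paper's starting point (your $(\alpha,\beta)$ is $(\rho,-\xi)$ in the paper's notation), and your sketch of the window estimate via characteristics plus an averaging argument is in the spirit of the paper's \Cref{lemH} and \Cref{lemM}. But there is a genuine gap at the heart of the argument: the uniform window contraction you claim for the first-order system, $\int_0^1(|\alpha|^p+|\beta|^p)(T)\le (1-\delta)\int_0^1(|\alpha|^p+|\beta|^p)(0)$, is false as stated. The pair $\alpha\equiv c$, $\beta\equiv -c$ (i.e.\ $u_t\equiv 0$, $u_x\equiv c$) solves your system, satisfies the reflection conditions $\alpha=-\beta$ at both endpoints, produces zero dissipation, and its energy does not decay at all. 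This also shows where your mechanism breaks: small dissipation and the reflections only give $\alpha\approx-\beta$ (in the damping strip and at the boundary), never that $\alpha$ and $\beta$ are ``actually close to each other,'' so the conclusion that the energy at time $T$ is controlled by $\eta$ plus the dissipated energy cannot be reached. In other words, the observability-type inequality you need fails for general data of the hyperbolic system; this is precisely why the paper states \Cref{thm-H} only for $(\rho-c_0,\xi-c_0)$, i.e.\ modulo the constant mode.

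The missing ingredient is the constraint inherited from the Dirichlet condition on $u$ itself: $\int_0^1(\alpha-\beta)(t,x)\,dx=2\int_0^1 u_x(t,x)\,dx=2\big(u(t,1)-u(t,0)\big)=0$, which is the conservation law \eqref{thmH-mass} (condition \eqref{cond-I} in the paper, with $c_0=0$ for wave solutions). In the paper's proof this enters at the very end: after the characteristics and the averaging lemma show that $\rho(t,z)$, $\xi(t,z)$ are close to their time averages $A_\rho$, $A_\xi$, that $|A_\rho-A_\xi|$ is small, and that $\rho(0,\cdot)$, $\xi(0,\cdot)$ are close to $A_\xi$, $A_\rho$ respectively, the zero-mean identity is what yields $|A_\rho+A_\xi|$ small (estimate \eqref{eq:fin2}); only the combination of the two kills both constants and contradicts the normalization, giving the lower bound on the dissipation. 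Your proposal never invokes this conserved quantity, and without it no amount of Gr\"onwall/covering bookkeeping along characteristics can rule out the non-decaying mode, so the central step of your plan must be repaired by inserting this constraint (or, equivalently, by proving decay only of the projection away from constants, as in \Cref{thm-H}, and checking that wave solutions have zero projection). A further, minor, point: for $1<p<2$ the passage from the dissipation functional to $\int\int a|\alpha+\beta|^p$ is not a one-line Gr\"onwall input but requires the elementary inequality of \Cref{lem-Ineq}, which produces the exponent $m_p^{2/p}$; this is handled in the paper and should be accounted for in your quantitative step as well.
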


The meaning of the (weak) solutions  given \Cref{thm-W} is given in \Cref{sect-WP} (see \Cref{def-WS}) and their well-posedness is also established there (see \Cref{pro-WP}). Our analysis is via the study of the decay of solutions of hyperbolic systems which are associated with \eqref{Sys-W} via the Riemann invariants. Such a decay for the hyperbolic system, even in the case $p=2$, is new to our knowledge. The analysis of these systems has its own interest and is  motivated by recent analysis on the controllability of hyperbolic systems in one dimensional space \cite{CoronNg15,CoronNg19,CoronNg19-2,CoronNg20}.

As in \cite{Haraux09, KMJC22},  we set 
\be \label{def-rhoxi}
\rho(t, x) = u_x(t, x) + u_t(t, x) \quad \mbox{ and } \quad \xi(t, x) = u_x(t, x) - u_t(t, x) \mbox{ for } (t, x) \in \mR_+ \times (0, 1). 
\ee 
One can check that for a smooth solution $u$ of \eqref{Sys-W} and \eqref{Dirichlet}, the pair of functions $(\rho, \xi)$ defined in \eqref{def-rhoxi} satisfies the system 
\be \label{Sys-H}
\left\{\ba{cl}
\rho_t  - \rho_x  = - \frac{1}{2} a (\rho - \xi) & \mbox{ in } \mR_+ \times (0, 1), \\[6pt]
\xi_t + \xi_x = \frac{1}{2} a  (\rho - \xi) & \mbox{ in } \mR_+ \times (0, 1), \\[6pt]
\rho(t, 0) - \xi (t, 0) = \rho(t, 1) - \xi(t, 1) = 0 & \mbox{ in } \mR_+. 
\ea\right.
\ee

One {\it cannot} hope the decay of a general solutions of \eqref{Sys-H} since any pair $(c, c)$ where $c \in \mR$ is a constant is a solution of \eqref{Sys-H}. Nevertheless, for $(\rho, \xi)$ being defined by \eqref{Sys-H} for a solution  $u$  of \eqref{Sys-W}, one also has the following additional information 
\be \label{cond-I}
\int_0^1 \rho(t, x) + \xi(t, x) \, d x = 0 \mbox{ for } t \ge 0. 
\ee

Concerning System \eqref{Sys-H} itself (i.e., without necessarily assuming \eqref{cond-I}), we prove the following result, which takes into account  \eqref{cond-I}. 

\begin{theorem}\label{thm-H} Let $1  <  p < + \infty$, $\eps_0 > 0$, $\lambda > 0$,  and $a \in L^\infty \big(\mR_+ \times (0, 1) \big)$ be such that $a \ge 0$ and $a \ge \lambda > 0$ in $\mR_+ \times (x_0 - \eps_0, x_0 + \eps_0) \subset \mR_+ \times (0, 1)$ for some $x_0 \in (0, 1)$.  There exist a positive constant $C$ and a positive constant $\gamma$ depending only on on $p$, $\| a\|_{L^\infty\big(\mR_+ \times (0, 1) \big)}$, $\eps_0$, and $\lambda$ such that the unique solution $(\rho,\xi)$ of \eqref{Sys-H} with the initial condition $\rho(0, \cdot) = \rho_0$ and $\xi(0, \cdot) = \xi_0$ satisfies 
\be \label{thmH-conclusion}
\| (\rho-c_0, \xi-c_0) (t, \cdot) \|_{L^p(0, 1)} \le C e^{-\gamma t} \|(\rho(0, \cdot) - c_0, \xi (0, \cdot)-c_0)\|_{L^p(0, 1)},\ t\geq 0, 
\ee
where 
\be\label{eq:c}
c_0:=\frac12 \int_0^1 \big(\rho(0, x) + \xi(0, x) \big) \, dx,  
\ee
\end{theorem}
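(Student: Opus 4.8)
\emph{Reduction and unfolding.} The first step is to normalize the constant away. Since the right-hand sides of \eqref{Sys-H} and the boundary relations depend on $(\rho,\xi)$ only through the difference $\rho-\xi$, the shifted pair $(\hat\rho,\hat\xi):=(\rho-c_0,\xi-c_0)$ is again a solution of \eqref{Sys-H}, now with $\int_0^1(\hat\rho+\hat\xi)(0,x)\,dx=0$; and because $\frac{d}{dt}\int_0^1(\rho+\xi)\,dx=(\rho-\xi)(t,1)-(\rho-\xi)(t,0)=0$ by the boundary condition in \eqref{Sys-H}, this normalization persists for all $t$. Since \eqref{thmH-conclusion} is exactly the desired estimate applied to $(\hat\rho,\hat\xi)$, it suffices to treat the case $c_0=0$, i.e. $\int_0^1(\rho+\xi)(t,\cdot)\,dx=0$. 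It is then convenient to unfold the boundary reflections: on the circle $\mT=\mR/2\ZZ$ define $\psi(t,y):=\xi(t,y)$ for $y\in[0,1]$ and $\psi(t,y):=\rho(t,2-y)$ for $y\in[1,2]$. The two boundary identities in \eqref{Sys-H} say precisely that $\psi(t,\cdot)$ matches across $y=0$ and $y=1$, and a direct computation converts \eqref{Sys-H} into the single transport equation with a ``mirror'' damping,
\be
\psi_t+\psi_y=\tfrac12\,\tilde a(t,y)\big(\psi(t,\sigma y)-\psi(t,y)\big)\qquad\text{on }\mR_+\times\mT,
\ee
where $\sigma y:=2-y$ is the reflection and $\tilde a$ is the even extension of $a$ (so $\tilde a\circ\sigma=\tilde a$, $0\le\tilde a\le\|a\|_{L^\infty}$, and $\tilde a\ge\lambda$ on the $\sigma$-invariant set $\omega:=(x_0-\eps_0,x_0+\eps_0)\cup\sigma(x_0-\eps_0,x_0+\eps_0)$ of measure $4\eps_0$). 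The constraint \eqref{cond-I} becomes $\int_\mT\psi(t,\cdot)=0$, and $\|(\rho,\xi)(t,\cdot)\|_{L^p(0,1)}\simeq\|\psi(t,\cdot)\|_{L^p(\mT)}$.

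\emph{Energy identity.} Multiplying the transport equation by $p|\psi|^{p-2}\psi$ and integrating over $\mT$ kills the transport term; symmetrizing the damping term by the measure‑preserving change $y\mapsto\sigma y$ together with $\tilde a\circ\sigma=\tilde a$ gives, for smooth solutions,
\be
\frac{d}{dt}\int_\mT|\psi|^p\,dy=-\frac p4\int_\mT\tilde a\,\big(|\psi|^{p-2}\psi-|\psi\circ\sigma|^{p-2}(\psi\circ\sigma)\big)\big(\psi-\psi\circ\sigma\big)\,dy\le 0,
\ee
by monotonicity of $s\mapsto|s|^{p-2}s$; the general case follows from the well-posedness/density results of \Cref{sect-WP}. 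Hence $t\mapsto\|\psi(t,\cdot)\|_{L^p(\mT)}$ is non‑increasing, and for $p\ge2$ the classical inequality $(|a|^{p-2}a-|b|^{p-2}b)(a-b)\ge c_p|a-b|^p$ yields, for any $T>0$,
\be
\int_\mT|\psi(0,\cdot)|^p\,dy-\int_\mT|\psi(T,\cdot)|^p\,dy\ \ge\ \frac{p\,c_p\,\lambda}{4}\int_0^T\!\!\int_\omega|\psi(t,y)-\psi(t,\sigma y)|^p\,dy\,dt .
\ee
(For $1<p<2$ one uses instead $(|a|^{p-2}a-|b|^{p-2}b)(a-b)\gtrsim_p|a-b|^2(|a|+|b|)^{p-2}$ and recovers the same right‑hand quantity via Hölder at the price of a power of the non‑increasing energy; this is a routine additional layer.)

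\emph{Observability — the crux.} What remains is a quantitative unique‑continuation estimate: there exist $T>0$ and $\kappa>0$, depending only on $p,\|a\|_{L^\infty},\eps_0,\lambda$, so that every solution with $\int_\mT\psi(0,\cdot)=0$ satisfies $\int_0^T\!\int_\omega|\psi-\psi\circ\sigma|^p\ge\kappa\int_\mT|\psi(0,\cdot)|^p$. I would exploit the explicit one‑dimensional (d'Alembert) structure: integrating along the characteristics $\tau\mapsto y+\tau$ (equivalently, solving the resulting scalar linear ODE in the moving frame) gives $\psi(t,y)=\psi(0,y-t)+R(t,y)$ with $R$ controlled by the damping term, up to Grönwall factors depending on $\|a\|_{L^\infty}$. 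On $\omega$ the quantity $\psi(t,y)-\psi(t,\sigma y)$ is exactly what we measure; substituting the representation, restricting to $\omega$, and using that over a window of length $T$ slightly larger than the period $2$ the translates $\omega-t$, $t\in[0,T]$, cover all of $\mT$, one transfers the smallness of $\int_0^T\!\int_\omega|\psi-\psi\circ\sigma|^p$ into the statement that $\psi(0,\cdot)$ is, modulo $R$, invariant (in an $L^p$, integrated sense) under every reflection $u\mapsto 2-2t-u$, $t\in[0,T]$; composing two such reflections produces translation by $2(t_1-t_2)$, so these symmetries generate all translations of $\mT$, forcing $\psi(0,\cdot)$ to be close to its mean, which is $0$. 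To control $R$ one splits $\{\tilde a>0\}$ into $\{\tilde a\ge\eta\}$ — where the relevant contribution is dominated by the full dissipation $\int_0^T\!\int_\mT\tilde a|\psi-\psi\circ\sigma|^p$ — and $\{0<\tilde a<\eta\}$ — where it is small because $\tilde a$ is — and then optimizes over $\eta$; this is precisely where $\|a\|_{L^\infty}$ (through the overdamping that a large $a$ away from $\omega$ can create) enters and degrades the final rate. Tracking these estimates quantitatively yields $\int_\mT|\psi(0,\cdot)|^p\le C(p,\|a\|_{L^\infty},\eps_0,\lambda)\int_0^T\!\int_\omega|\psi-\psi\circ\sigma|^p$, i.e. the claimed $\kappa$. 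This is the one genuinely delicate point, because the dynamics is purely transport‑driven and offers no compactness: the argument must be fully quantitative and must honestly absorb the corrections coming from the damping being large outside $\omega$.

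\emph{Conclusion.} Combining the last two displays, $\int_\mT|\psi(T,\cdot)|^p\le(1-\delta)\int_\mT|\psi(0,\cdot)|^p$ with $\delta=\tfrac{p\,c_p\,\lambda}{4}\kappa\in(0,1)$ ($<1$ since the energy is non‑negative, $>0$ by the observability step). Since the equation, the constraint $\int_\mT\psi=0$, and the bound $\tilde a\ge\lambda$ on $\omega$ are invariant under time translation, the same estimate holds on each $[nT,(n+1)T]$, so $\int_\mT|\psi(nT,\cdot)|^p\le(1-\delta)^n\int_\mT|\psi(0,\cdot)|^p$; monotonicity of the energy on the intermediate intervals upgrades this to $\|\psi(t,\cdot)\|_{L^p(\mT)}\le(1-\delta)^{-1/p}e^{-\gamma t}\|\psi(0,\cdot)\|_{L^p(\mT)}$ with $\gamma=-\tfrac{1}{pT}\ln(1-\delta)>0$. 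Undoing the unfolding and the shift $(\rho,\xi)\mapsto(\rho-c_0,\xi-c_0)$ gives \eqref{thmH-conclusion}, with $C,\gamma$ depending only on $p,\|a\|_{L^\infty(\mR_+\times(0,1))},\eps_0,\lambda$.
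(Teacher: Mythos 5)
Your reduction to $c_0=0$, the unfolding of \eqref{Sys-H} into a single transport equation on $\mT=\mR/2\ZZ$ with the mirror damping $\tfrac12\tilde a(\psi\circ\sigma-\psi)$, the symmetrized $L^p$ energy identity, the preservation of the zero-mean constraint, the reduction of \eqref{thmH-conclusion} to a quantitative observability inequality, and the final contraction-plus-iteration step are all correct, and they run parallel to the paper's proof (energy identity, mass conservation, reduction to the dissipation lower bound); the unfolding is an attractive repackaging of the two boundary conditions, and the extra layer you wave at for $1<p<2$ is exactly what \Cref{lem-Ineq} provides.

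The gap is in the step you yourself call the crux: the observability inequality is asserted via a plan, not proved, and the plan as stated is imprecise at the decisive point. For each fixed $t$ you control the reflection difference $\psi(0,\cdot)-\psi(0,r_t\cdot)$ only on the moving window $\omega-t$, of measure $4\eps_0$, not on all of $\mT$; so ``$\psi(0,\cdot)$ is invariant under every reflection, and composing two reflections gives all translations'' does not follow directly --- to compose $r_{t_1}$ and $r_{t_2}$ you need $u\in\omega-t_1$ \emph{and} $r_{t_1}u\in\omega-t_2$, and arranging this uniformly is real work, not a remark. (Within your own framework a cleaner route is to fix $y\in(x_0-\eps_0,x_0+\eps_0)$ and integrate in $t$ over a full period: the change of variables $u=y-t$ turns the measured quantity into the translation difference $\|\psi(0,\cdot)-\psi(0,\cdot+2(1-y))\|_{L^p(\mT)}$ over the \emph{whole} circle, for a positive-measure set of shifts.) Moreover, even once shift differences are controlled for shifts in a set of size of order $\eps_0$, the implication ``approximately translation invariant in $L^p$ $\Rightarrow$ close to its mean'' is itself a quantitative lemma that you state as obvious; this is precisely the content of the paper's \Cref{lemM}, while the transfer along characteristics with the remainder $R$ absorbed by the $a$-weighted dissipation (note $\tilde a^p\le\|a\|_{L^\infty}^{p-1}\tilde a$, so your $\eta$-splitting is unnecessary) is the content of \Cref{lemH}. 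With those two lemmas supplied your outline closes and is in substance the paper's argument in unfolded coordinates; without them, the central estimate remains unproved.
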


In \Cref{thm-H},   we consider the broad solutions. It is understood through the broad solution in finite time: for $T>0$ and $1\le p < + \infty$, a broad solution $u$ of the system 
\be \label{Sys-H-1}
\left\{\ba{cl}
\rho_t - \rho_x = - \frac{1}{2} a (\rho - \xi) & \mbox{ in } (0, T) \times (0, 1), \\[6pt]
\xi_t + \xi_x = \frac{1}{2} a (\rho - \xi) & \mbox{ in } (0, T) \times (0, 1), \\[6pt]
\rho(t, 0) - \xi (t, 0) = \rho(t, 1) - \xi(t, 1) = 0  & \mbox{ in } (0, T), \\[6pt]
\rho(0, \cdot) = \rho_0, \quad \xi(0, \cdot) = \xi_0 &  \mbox{ in } (0, 1), 
\ea\right.
\ee
is a pair of functions $(\rho, \xi) \in C([0, T]; \big[L^p(0, 1) \big]^2 \big) \cap C([0, 1]; \big[L^p(0, T) \big]^2 \big)$ which obey the characteristic rules, see e.g., \cite{CoronNg19}. The well-posedness of \eqref{Sys-H-1} can be found in \cite{CoronNg19} (see also the appendix of \cite{CoronNg-T-21}). The analysis there is mainly for the case $p=2$ but the arguments extend naturally for the case $1 \le  p < + \infty$.

In the $L^p$-framework,  the Neumann boundary condition and its corresponding hyperbolic systems  are discussed in \Cref{sect-Neumann} and  the dynamic boundary  condition and its corresponding hyperbolic systems are discussed in \Cref{sect-Dynamic}.  Concerning the dynamic boundary condition, the decay holds even under the assumption $a \ge 0$. The analysis for the Neumann  case shares a large part in common with the one of the Dirichlet boundary condition. The difference in their analysis comes from taking into account differently the boundary condition. The analysis of the dynamic condition is similar but much simpler.

The study of the wave equation in one dimensional space via the corresponding hyperbolic system is known. The controllability and stability of hyperbolic systems has been also investigated extensively. This goes back to the work of Russel \cite{Russell73, Russell78} and Rauch and Taylor \cite{RT74}. Many important progress has been obtained recently, see,  e.g.,  \cite{2016-Bastin-Coron-book} and the references therein. It is worth noting that many works have been devoted to the $L^2$-framework.  Less is studied in the $L^p$-scale. In this direction, we want to mention \cite{CoronNg15} where the exponential stability is studied for dissipative boundary condition. 

Concerning the wave equation in one dimensional space, the exponential decay in $L^2$-setting for  the dynamic boundary condition is also established via its corresponding hyperbolic systems \cite{QR77}. However, to our knowledge, the exponential decay for the Dirichlet and Neumann boundary conditions has not been established  even in $L^2$-framework via this approach.  Our work is new and quite distinct from the one in \cite{QR77} and has its own interest. First, the analysis in \cite{QR77} uses essentially the fact that the boundary is strictly dissipative, i.e., $\kappa > 0$ in \eqref{Dynamic}. Thus the analysis cannot be used for the Dirichlet and Neumann boundary conditions. Moreover, it is not clear how to extend it to the $L^p$-framework. Concerning our analysis, the key observation is that the information of the internal energy  allows one to control  the difference of the solutions and its mean value in the interval of time $(0, T)$ in an average sense. This observation is implemented in  two lemmas (\Cref{lemH} and \Cref{lemM}) after using a standard result (\Cref{lem-Ineq}) presented in 
\Cref{sect-Lemmas}.  These two lemmas are the main ingredients of our analysis for the Dirichlet and Neumann boundary conditions. The proof of the first lemma is mainly based on the characteristic method while as the proof of the second lemma is inspired from the theory of functions with bounded mean oscillations due to John and Nirenberg \cite{JN61}. As seen later that, the analysis for the dynamic boundary condition is much simpler for which the use of  \Cref{lem-Ineq} is sufficient.  

An interesting point of our analysis is the fact that these lemmas do not depend on the boundary conditions used. In fact, one can apply it in a setting where a bound of the internal energy is accessible. This allows us to deal with all the boundary conditions considered in this paper by the same way. Another point of our analysis which is helpful to be mentioned is that  the asymptotic stability for hyperbolic systems in one dimensional space has been mainly studied for general solutions. This is not the case in the setting of \Cref{thm-H} where the asymptotic stability holds under  condition \eqref{cond-I}. It is also worth noting that  the time-dependent coefficients generally make the phenomena more complex, see \cite{CoronNg-T-21} for a discussion on the optimal null-controllable time.

The analysis in this paper cannot handle the cases $p=1$ and  $p= + \infty$. Partial results in this direction for the Dirichlet boundary condition can be found in  \cite{KMJC22} where $a$ is constant and in some range. These cases will be considered elsewhere by different approaches.

The paper is organized as follows. The well-posedness of \eqref{Sys-W} equipped with one of the boundary conditions  \eqref{Dirichlet} and  \eqref{Neumann} is discussed in \Cref{sect-WP}, where a slightly more general context is considered (the boundary condition \eqref{Dynamic} is considered directly in \Cref{sect-Dynamic};  comments on this point is given in  \Cref{rem-Dynamic}). \Cref{sect-Dirichlet} is devoted to the proof of \Cref{thm-W} and \Cref{thm-H}. We also relaxed slightly the non-negative assumption on $a$ in \Cref{thm-W} and \Cref{thm-H} there (see \Cref{thm-Hp} and \Cref{thm-Wp}) using a standard perturbative argument. The Neumann boundary condition is studied in \Cref{sect-Neumann} and  the Dynamic boundary condition is considered in \Cref{sect-Dynamic}.  


\section{The well-posedness in $L^p$-setting} \label{sect-WP}

In this section, we give the meaning of the solutions of the damped wave equation \eqref{Sys-W} equipped with either the Dirichlet boundary condition \eqref{Dirichlet} or the Neumann boundary condition \eqref{Neumann} and establish their well-posedness in the $L^p$-framework with $1 \le p \le + \infty$.  We will consider a slightly more general context. More precisely, we consider the system 
\be \label{Sys-WG}
\left\{\ba{cl}
\partial_{tt} u - \partial_{xx} u + a \partial_t u + b  \partial_x u + c  u  = f & \mbox{ in } (0, T) \times (0, 1), \\[6pt]
u(0, \cdot) = u_0, \quad \partial_t u (0, \cdot) = u_1 & \mbox{ in } (0, 1), 
\ea \right. 
\ee
equipped with either 
\be \label{DirichletT}
\mbox{Dirichlet boundary condition: $u(t, 0) = u(t, 1) = 0   \mbox{ for } t \in (0, T)$}, 
\ee
or  
\be\label{NeumannT}
\mbox{Neumann boundary condition: $\partial_x u(t, 0)  = \partial_x u(t, 1) = 0   \mbox{ for } t \in (0, T)$}.  
\ee
Here $a, b, c \in L^\infty((0, T) \times (0, 1))$ and $f \in L^p((0, T) \times (0, 1))$. 

\medskip 
We begin with the Dirichlet boundary condition.

\begin{definition}\label{def-WS} Let $T> 0$, $1 \le  p <  + \infty$, $a, b, c \in L^\infty((0, T) \times (0, 1))$, $f \in L^p((0, T) \times (0, 1))$, $u_0 \in W^{1, p}_0(0, 1)$, and $u_1 \in L^p(0, 1)$.  A function $u \in C([0, T]; W^{1, p}_0(0, 1)) \cap C^1([0, T]; L^p(0, 1))$ is called a (weak) solution of \eqref{Sys-WG} and \eqref{DirichletT} (up to time $T$)  if 
\be\label{def-WS-initial}
u(0, \cdot) = u_0,  \quad \partial_t u(0, \cdot) = u_1 \mbox{ in } (0, 1),  
\ee
and 
\begin{multline}\label{def-WS-identity}
\frac{d^2}{dt^2}\int_{0}^1 u (t, x) v(x) \, dx + \int_{0}^1 u_x(t, x) v_x(x) \, dx  
+ \int_0^1 a(t, x) u_t (t, x) v(x) \, dx \\[6pt] 
+ \int_0^1 b(t, x) u_x (t, x) v(x) \, dx  + \int_0^1 c(t, x) u (t, x) v(x) \, dx   =  \int_{0}^1 f(t, x) v(x) \, dx
\end{multline}
in the distributional sense in $(0, T)$ for all $v \in C^1_{c}(0, 1)$. 
\end{definition}

\Cref{def-WS} can be modified to deal with the case $p= +\infty$ as follows. 

\begin{definition}\label{def-WS-infty} Let $T> 0$,  $a, b, c \in L^\infty((0, T) \times (0, 1))$, $f \in L^\infty((0, T) \times (0, 1))$, $u_0 \in W^{1, \infty}_0(0, 1)$, and $u_1 \in L^\infty(0, 1)$. A function  $u \in L^\infty([0, T]; W^{1, \infty}_0(0, 1)) \cap W^{1, \infty}([0, T]; L^\infty(0, 1))$ is  called a (weak) solution of \eqref{Sys-WG} and \eqref{DirichletT} (up to time $T$) if $u \in C([0, T]; W^{1, 2}_0(0, 1)) \cap C^1([0, T]; L^2(0, 1))$ \footnote{By interpolation, one can use  $C([0, T]; W^{1, 2}_0(0, 1)) \cap C^1([0, T]; L^2(0, 1))$ instead of $C([0, T]; W^{1, 2}_0(0, 1)) \cap C^1([0, T]; L^2(0, 1))$ for any $1 \le q < + \infty$. This condition is used to give the meaning of the initial conditions.} and satisfies \eqref{def-WS-initial} and \eqref{def-WS-identity}. 
\end{definition}

Concerning the Neumann boundary condition, we have the following definition.  
\begin{definition}\label{def-WSN} Let $T> 0$, $1 \le  p <  + \infty$,  $a, b, c \in L^\infty((0, T) \times (0, 1))$, $f \in L^p((0, T) \times (0, 1))$, $u_0 \in W^{1, p}(0, 1)$, and $u_1 \in L^p(0, 1)$.  A function $u \in C([0, T]; W^{1, p}(0, 1)) \cap C^1([0, T]; L^p(0, 1))$ is called a (weak) solution of \eqref{Sys-WG} and \eqref{NeumannT} (up to time $T$)
if \eqref{def-WS-initial} is valid and 
\begin{multline}\label{def-WSNR-identity}
\frac{d^2}{dt^2}\int_{0}^1 u (t, x) v(x) \, dx + \int_{0}^1 u_x(t, x) v_x(x) \, dx    \\[6pt] 
+ \int_0^1 b(t, x) u_x (t, x) v(x) \, dx  + \int_0^1 c(t, x) u (t, x) v(x) \, dx 
+ \int_0^1 a(t, x) u_t (t, x) v(x) \, dx   =  \int_{0}^1 f(t, x) v(x) \, dx
\end{multline}
holds  in the distributional sense in $(0, T)$ for all $v \in C^1([0, 1])$. 
\end{definition}

\Cref{def-WS} can be modified to deal with the case $p= +\infty$ as follows. 

\begin{definition}\label{def-WSN-infty} Let $T> 0$,   $a, b, c \in L^\infty((0, T) \times (0, 1))$, $f \in L^\infty((0, T) \times (0, 1))$, $u_0 \in W^{1, \infty}(0, 1)$, and $u_1 \in L^\infty(0, 1)$. A function  $u \in L^\infty([0, T]; W^{1, \infty}(0, 1)) \cap W^{1, \infty}([0, T]; L^\infty(0, 1))$ is  called a (weak) solution of \eqref{Sys-WG} and \eqref{NeumannT} (up to time $T$) if $u \in C([0, T]; W^{1, 2}(0, 1)) \cap C^1([0, T]; L^2(0, 1))$ \footnote{By interpolation, one can use  $C([0, T]; W^{1, 2}_0(0, 1)) \cap C^1([0, T]; L^2(0, 1))$ instead of $C([0, T]; W^{1, 2}(0, 1)) \cap C^1([0, T]; L^2(0, 1))$ for any $1 \le q < + \infty$. This condition is used to give the meaning of the initial conditions.}, 
 \eqref{def-WS-initial} is valid, and \eqref{def-WS-identity} holds  in the distributional sense in $(0, T)$ for all $v \in C^1([0, 1])$. 
\end{definition}

Concerning the well-posedness of the Dirichlet system \eqref{Sys-WG} and \eqref{DirichletT}, we establish the following result. 

\begin{proposition}\label{pro-WP} Let $T> 0$, $1 \le  p \le   + \infty$, and $a, b, c \in L^\infty((0, T) \times (0, 1))$, and let $u_0 \in W^{1, p}_0(0, 1)$, $u_1 \in L^p(0, 1)$, and  $f \in L^p\big((0, T) \times (0, 1) \big)$.  Then there exists a unique (weak) solution $u$  of \eqref{Sys-WG} and \eqref{DirichletT}. Moreover, it holds  
\be  \label{pro-WP-statement}
\| \partial_t u(t, \cdot) \|_{L^p(0, 1)} + \| \partial_x u(t, \cdot) \|_{L^p(0, 1)}  \le C \Big(\|u_1 \|_{L^p(0, 1)} + \| \partial_x u_0 \|_{L^p(0, 1)} + \| f\|_{L^p\big((0, T) \times (0, 1)  \big)} \Big),\ t\geq 0
\ee
for some positive constant $C= C(p, T, \|a\|_{L^\infty}, \|b\|_{L^\infty}, \|c\|_{L^\infty})$ which is independent of $u_0$, $u_1$, and $f$. 
\end{proposition}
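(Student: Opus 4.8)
\emph{Reduction.} The plan is to pass to the Riemann invariants and view \eqref{Sys-WG}--\eqref{DirichletT} as a $2\times2$ hyperbolic system with a bounded, partly nonlocal source, for which I would invoke the broad-solution theory already used for \eqref{Sys-H-1}. With $\rho=\partial_xu+\partial_tu$ and $\xi=\partial_xu-\partial_tu$ as in \eqref{def-rhoxi}, one has $\partial_tu=(\rho-\xi)/2$, $\partial_xu=(\rho+\xi)/2$ and, since $u(t,0)=0$, $u(t,x)=U[\rho,\xi](t,x):=\tfrac12\int_0^x(\rho+\xi)(t,y)\,dy$; a smooth solution of \eqref{Sys-WG}--\eqref{DirichletT} then corresponds to a solution of
\be
\left\{\ba{cl}
\rho_t-\rho_x=-\tfrac12 a(\rho-\xi)-\tfrac12 b(\rho+\xi)-c\,U[\rho,\xi]+f & \mbox{in }(0,T)\times(0,1),\\[6pt]
\xi_t+\xi_x=\tfrac12 a(\rho-\xi)+\tfrac12 b(\rho+\xi)+c\,U[\rho,\xi]-f & \mbox{in }(0,T)\times(0,1),\\[6pt]
\xi(t,0)=\rho(t,0),\quad \rho(t,1)=\xi(t,1) & \mbox{in }(0,T),\\[6pt]
\rho(0,\cdot)=\partial_xu_0+u_1,\quad \xi(0,\cdot)=\partial_xu_0-u_1 & \mbox{in }(0,1),
\ea\right.
\ee
the boundary relations being $u(t,0)=u(t,1)=0$ rewritten through $\rho-\xi=2\partial_tu$. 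Conversely, from a broad solution $(\rho,\xi)\in C([0,T];[L^p(0,1)]^2)\cap C([0,1];[L^p(0,T)]^2)$ I would recover $u:=U[\rho,\xi]$: the hypothesis $\partial_xu_0\in L^p$ forces $\int_0^1(\rho_0+\xi_0)=0$, which together with the boundary relations propagates $U[\rho,\xi](t,1)=0$, while \eqref{def-WS-identity} follows from $u=U[\rho,\xi]$, $\partial_tu=(\rho-\xi)/2$, the transport equations and one integration by parts (using $v(0)=v(1)=0$). The equivalence at $L^p$-regularity, the meaning of the boundary traces of $(\rho,\xi)$ and the formal computations in the next steps would be justified by mollifying $a,b,c,f,u_0,u_1$, using classical solutions and passing to the limit with the uniform bounds below; the underlying machinery is that of \cite{CoronNg19}, the only new ingredient being the bounded nonlocal perturbation $c\,U[\rho,\xi]$.

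\emph{Step 1 (local existence).} Fixing $\tau\in(0,1]$ and $X_\tau:=C([0,\tau];[L^p(0,1)]^2)$, I would define $\Phi\colon X_\tau\to X_\tau$ by taking $\Phi(\bar\rho,\bar\xi)$ to be the broad solution of the above system with the right-hand side frozen at $(\bar\rho,\bar\xi)$. Solving the frozen system is elementary: it is transport at speeds $\pm1$ with a given $L^p$ source and reflection conditions at $\{x=0,1\}$ that are the identity, a characteristic reflecting at most once during a time interval of length $\tau<1$. Integrating along characteristics, using Minkowski's integral inequality for the Duhamel term and $\|U[\bar\rho,\bar\xi](t)\|_{L^p(0,1)}\le\|U[\bar\rho,\bar\xi](t)\|_{L^\infty}\le C\|(\bar\rho,\bar\xi)(t)\|_{L^p(0,1)}$, gives $\|\Phi(\bar\rho,\bar\xi)\|_{X_\tau}\le(1+C\tau)\|(\rho_0,\xi_0)\|_{L^p(0,1)}+C\tau(\|a\|_\infty+\|b\|_\infty+\|c\|_\infty)\|(\bar\rho,\bar\xi)\|_{X_\tau}+C\|f\|_{L^p((0,\tau)\times(0,1))}$. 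As $\Phi$ is affine, $\Phi(\bar\rho_1,\bar\xi_1)-\Phi(\bar\rho_2,\bar\xi_2)$ solves the system with zero data and source built from the difference, hence $\|\Phi(\bar\rho_1,\bar\xi_1)-\Phi(\bar\rho_2,\bar\xi_2)\|_{X_\tau}\le C\tau(\|a\|_\infty+\|b\|_\infty+\|c\|_\infty)\|(\bar\rho_1,\bar\xi_1)-(\bar\rho_2,\bar\xi_2)\|_{X_\tau}$; choosing $\tau$ small, depending only on $p,\|a\|_\infty,\|b\|_\infty,\|c\|_\infty$, makes $\Phi$ a contraction, and its fixed point is the unique broad solution on $[0,\tau]$.

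\emph{Step 2 (a priori bound, globalization, uniqueness).} For a broad solution on $[0,T]$ I would multiply the $\rho$-equation by $p|\rho|^{p-2}\rho$, the $\xi$-equation by $p|\xi|^{p-2}\xi$, integrate over $(0,1)$ and add; the first-order terms contribute exactly $|\rho(t,1)|^p-|\rho(t,0)|^p-|\xi(t,1)|^p+|\xi(t,0)|^p$, which vanishes since $\rho(t,1)=\xi(t,1)$ and $\xi(t,0)=\rho(t,0)$. Estimating the remaining terms with Young's inequality and $\|U[\rho,\xi](t)\|_{L^p}\le C\|(\rho,\xi)(t)\|_{L^p}$ produces a linear differential inequality for $\|(\rho,\xi)(t,\cdot)\|_{L^p(0,1)}$; Gr\"onwall's inequality and H\"older in $t$ then give $\|(\rho,\xi)(t,\cdot)\|_{L^p(0,1)}\le C\big(\|(\rho_0,\xi_0)\|_{L^p(0,1)}+\|f\|_{L^p((0,T)\times(0,1))}\big)$ for $t\in[0,T]$ with $C=C(p,T,\|a\|_\infty,\|b\|_\infty,\|c\|_\infty)$ --- this is \eqref{pro-WP-statement}, because $\|\partial_tu\|_{L^p}+\|\partial_xu\|_{L^p}\le\|\rho\|_{L^p}+\|\xi\|_{L^p}$ and $\|(\rho_0,\xi_0)\|_{L^p}\le2(\|u_1\|_{L^p}+\|\partial_xu_0\|_{L^p})$. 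This bound being uniform in $\tau$, iterating Step~1 yields a solution on $[0,T]$; and as it holds for every weak solution through the equivalence of the first paragraph, applying it to the difference of two solutions (zero data, $f\equiv0$) gives uniqueness. For $p=+\infty$, since $u_0\in W^{1,\infty}_0(0,1)\subset W^{1,2}_0(0,1)$ and $u_1\in L^\infty(0,1)\subset L^2(0,1)$, the unique $L^2$-weak solution satisfies, by the same characteristic/Gr\"onwall argument carried out in $L^\infty$, $u\in L^\infty([0,T];W^{1,\infty}_0(0,1))\cap W^{1,\infty}([0,T];L^\infty(0,1))$ with \eqref{pro-WP-statement}; hence it is a solution in the sense of \Cref{def-WS-infty}, and uniqueness is inherited from the $L^2$ case.

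\emph{Main obstacle.} No single estimate is difficult; the delicate point is the rigorous equivalence between the weak formulation of \Cref{def-WS} and the broad-solution formulation of the hyperbolic system --- in particular the meaning of the boundary traces of $(\rho,\xi)$ and the legitimacy of the $|\rho|^{p-2}\rho$, $|\xi|^{p-2}\xi$ multiplier computation at the low regularity of broad solutions. Both would be handled by mollification and passage to the limit, the uniform estimates of Step~2 supplying the needed compactness, exactly as in the broad-solution theory of \cite{CoronNg19} adapted to the additional lower-order and nonlocal terms.
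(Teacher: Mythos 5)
Your proposal is essentially correct, but it takes a genuinely different route from the paper. The paper proves \Cref{pro-WP} at the level of the second-order equation: it treats $-a\partial_t u-b\partial_x u-cu$ (together with $f$) as a source term, extends $u$ and this source to $(0,T)\times\mR$ by odd reflection so that the Dirichlet condition is built into the extension, and then reads off pointwise integral representations of $u$, $\partial_t u$, $\partial_x u$ from the d'Alembert formula; a Gronwall argument gives uniqueness, and existence follows by smoothing $a,b,c,u_0,u_1$ and passing to the limit in the same estimates. You instead pass to the Riemann invariants, obtaining a $2\times 2$ transport system with identity reflections at $x=0,1$ and a bounded nonlocal term $c\,U[\rho,\xi]$, prove local existence by a contraction along characteristics, and get \eqref{pro-WP-statement} from the $L^p$ multiplier/Gr\"onwall computation. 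What the paper's route buys is that uniqueness is essentially free (a weak solution in the sense of \Cref{def-WS} is plugged into the d'Alembert formula after extension; no lateral traces of $(\rho,\xi)$ ever need to be constructed) and that $p=1$ and $p=+\infty$ are covered by one and the same formula; what your route buys is coherence with the rest of the paper, since you work directly with the system used later for the decay, and a reusable well-posedness statement for that system with lower-order and nonlocal terms. The price is exactly the point you flag: the equivalence between \Cref{def-WS} and the broad-solution formulation (existence of the $C([0,1];L^p(0,T))$ representative and the fact that $u(t,0)=u(t,1)=0$ forces $\rho=\xi$ on the lateral boundary) is the bulk of a complete proof rather than a remark, although it is doable --- e.g.\ from $u\in C([0,T];W^{1,p}_0(0,1))$ one gets $u(t,x)\to 0$ uniformly in $t$ as $x\to 0$, hence $\int_0^t(\rho-\xi)(s,x)\,ds=2\big(u(t,x)-u(0,x)\big)\to 0$, so the $L^p(0,T)$ trace of $\rho-\xi$ at $x=0$ vanishes. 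Two small corrections: the zero-mean property $\int_0^1(\rho_0+\xi_0)\,dx=0$ comes from $u_0\in W^{1,p}_0(0,1)$, i.e.\ $u_0(0)=u_0(1)=0$, not merely from $\partial_x u_0\in L^p$; and the multiplier $p|\rho|^{p-2}\rho$ degenerates at $p=1$, which is included in the statement, so for that case you should either regularize the sign function or simply iterate the characteristics/Duhamel estimate of your Step~1, which already yields \eqref{pro-WP-statement} for all $1\le p\le+\infty$.
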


Concerning the well-posedness of the Neumann system \eqref{Sys-WG} and \eqref{NeumannT}, we prove the following result. 
 
\begin{proposition}\label{pro-WPN} Let $T> 0$, $1 \le  p \le   + \infty$, and $a, b, c \in L^\infty((0, T) \times (0, 1))$, and let $u_0 \in W^{1, p}(0, 1)$, $u_1 \in L^p(0, 1)$, and  $f \in L^p\big((0, T) \times (0, 1) \big)$.  Then there exists a unique (weak) solution $u$  of \eqref{Sys-WG} and \eqref{NeumannT} and 
\be\label{pro-WP-statement}
\| \partial_t u(t, \cdot) \|_{L^p(0, 1)} + \| \partial_x u(t, \cdot) \|_{L^p(0, 1)}  \le C \Big(\|u_1 \|_{L^p(0, 1)} + \| \partial_x u_0 \|_{L^p(0, 1)} + \| f\|_{L^p\big((0, T) \times (0, 1)  \big)} \Big),\ t\geq 0
\ee
for some positive constant $C= C(p, T, \|a\|_{L^\infty}, \|b\|_{L^\infty}, \|c\|_{L^\infty})$ which is independent of $u_0$, $u_1$, and $f$. 
\end{proposition}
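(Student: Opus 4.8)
The plan is to reduce the Neumann well-posedness to the Dirichlet well-posedness already obtained in \Cref{pro-WP}, exploiting the one-dimensional structure. First I would treat the case $1 \le p < +\infty$; the case $p = +\infty$ follows afterwards by the interpolation/regularity remark in \Cref{def-WSN-infty} together with $L^\infty$ bounds on the constructed solution. For existence, the cleanest route is a reflection argument: given Neumann data $(u_0, u_1, f)$ on $(0,1)$, extend $u_0$ by even reflection across $x=0$ and $x=1$ to a $2$-periodic function $\tilde u_0 \in W^{1,p}_{loc}(\mR)$, and similarly extend $u_1$ and $f(t,\cdot)$ by even reflection to $2$-periodic functions. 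Solve the Cauchy problem for the damped wave equation with the (reflected, $2$-periodic) coefficients $\tilde a, \tilde b, \tilde c$ on the whole line — or equivalently on a torus of length $2$ — for which well-posedness and the a priori bound \eqref{pro-WP-statement} are available by the same characteristic/Duhamel method used for \Cref{pro-WP} (one may alternatively phrase the line problem on $(-1,2)$ with, say, Dirichlet data at the far endpoints, noting finite speed of propagation makes the far boundary irrelevant on $[0,T]$ after enlarging the interval). By uniqueness for that line problem and the symmetry of the reflected data and coefficients (note $b$ must be reflected oddly, i.e. $\tilde b(x) = -b(-x)$ near $0$, so that the equation is preserved under $x \mapsto -x$; since $b$ is merely $L^\infty$ this is harmless), the solution $\tilde u$ inherits the even symmetry, hence $\partial_x \tilde u$ is odd about $x=0$ and $x=1$, which forces $\partial_x u(t,0) = \partial_x u(t,1) = 0$ for a.e.\ $t$; the restriction $u = \tilde u|_{(0,1)}$ is then the desired Neumann weak solution, and it lies in $C([0,T];W^{1,p}(0,1)) \cap C^1([0,T];L^p(0,1))$ with the estimate \eqref{pro-WP-statement} directly inherited from the line estimate.

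The verification that the restriction actually satisfies \Cref{def-WSN}, i.e.\ identity \eqref{def-WSNR-identity} against all $v \in C^1([0,1])$ (including $v$ not vanishing at the endpoints), is where the Neumann boundary condition enters and is the only genuinely new point. Here I would argue by density or by testing the line equation: given $v \in C^1([0,1])$, extend it by even reflection to a $2$-periodic Lipschitz test function $\tilde v$ on $\mR$; testing the line equation satisfied by $\tilde u$ against $\tilde v$ on $(-1,1)$ (a full period straddling $x=0$) and using the evenness of $\tilde u$, $\tilde v$ and of $\tilde a, \tilde c$ (and oddness of $\tilde b$) yields exactly twice the identity \eqref{def-WSNR-identity} — the boundary terms at $x=0$ that would appear when integrating $\int u_x v_x$ by parts cancel because $\partial_x \tilde u$ is odd while $\tilde v$ is even, so no spurious boundary contribution survives. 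This is the place to be slightly careful: $\tilde u$ need not be $C^2$, so one does not integrate by parts in $x$ at all; instead one keeps the weak formulation \eqref{def-WS-identity}-type identity on the period and simply matches it, term by term, with \eqref{def-WSNR-identity} using the symmetries, which is a routine change of variables $x \mapsto -x$ on the half-period $(-1,0)$. Uniqueness for the Neumann problem is obtained symmetrically: a weak solution $u$ of \eqref{Sys-WG}-\eqref{NeumannT} with zero data, extended evenly and $2$-periodically to $\tilde u$, satisfies the line equation in the weak sense (the Neumann condition is precisely what makes the even extension consistent across the reflection points with no distributional jump in $\partial_x \tilde u$), hence $\tilde u \equiv 0$ by uniqueness for the line problem, so $u \equiv 0$.

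For $p = +\infty$, the constructed $\tilde u$ solves the line problem and, by the $L^p$ bounds for finite $p$ applied on the enlarged interval plus the $L^\infty$ data, one checks $\tilde u \in L^\infty([0,T];W^{1,\infty}) \cap W^{1,\infty}([0,T];L^\infty)$ with \eqref{pro-WP-statement}; the required $C([0,T];W^{1,2}) \cap C^1([0,T];L^2)$ regularity (used only to make sense of the initial data) is automatic since $W^{1,\infty} \hookrightarrow W^{1,2}$ on a bounded interval and the $p=2$ theory gives the continuity in time. Uniqueness in the $p=+\infty$ class reduces to the $p=2$ uniqueness already contained in the finite-$p$ statement.

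The main obstacle I anticipate is not conceptual but bookkeeping: making the reflection argument rigorous at the level of the \emph{broad}/weak solutions with only $L^\infty$ coefficients — in particular checking that the even extension of a Neumann weak solution is genuinely a weak solution of the line equation \emph{across} the reflection points $x \in \mZ$, and conversely that restricting an even line solution recovers the Neumann test-function identity for all $v\in C^1([0,1])$ rather than only for $v$ compactly supported in $(0,1)$. Once that matching of weak formulations is done carefully, existence, uniqueness, and the a priori estimate \eqref{pro-WP-statement} all transfer from \Cref{pro-WP} (applied on a slightly larger interval, using finite speed of propagation to discard the artificial far boundary) with no extra work. An alternative to reflection, if one prefers to stay on $(0,1)$, is to mirror the proof of \Cref{pro-WP} line by line in the Riemann-invariant variables $(\rho,\xi)$ of \eqref{def-rhoxi}, where the Neumann condition becomes $\rho(t,0)+\xi(t,0) = \rho(t,1)+\xi(t,1) = 0$, a dissipative-type boundary coupling of exactly the same character as the Dirichlet coupling $\rho - \xi = 0$; the characteristic/Duhamel fixed-point scheme producing the broad solution and the bound \eqref{pro-WP-statement} is then identical, with only the boundary reflection operator changed in sign.
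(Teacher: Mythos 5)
Your proposal is correct and takes essentially the same route as the paper: the paper's proof of \Cref{pro-WPN} is exactly the proof of \Cref{pro-WP} (d'Alembert formula plus a Gronwall-type argument for uniqueness, smooth approximation with d'Alembert-based $L^p$ estimates for existence) with the odd reflection across $x=0$ and $x=1$ replaced by an even one. Your way of organizing existence---solving the evenly reflected, $2$-periodic problem and recovering the identity \eqref{def-WSNR-identity} for all $v\in C^1([0,1])$ by symmetry, with $\tilde b(x)=-b(-x)$ and the Neumann weak formulation being precisely what makes the even extension a distributional solution across the reflection points---is a mild reorganization of the same reflection idea and fills in details the paper leaves to the reader.
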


\begin{remark} \rm 
The definition of weak solutions and the well-posedness are stated for $p=1$ and $p=+\infty$ as well. The existence and the well-posedness is well-known in the case $p=2$. The standard analysis in the case $p=2$ is via the Galerkin method. 
\end{remark}

The rest of this section is devoted to the proof of \Cref{pro-WP} and \Cref{pro-WPN} in \Cref{sect-proWP} and \Cref{sect-proWPN}, respectively. 

\subsection{Proof of \Cref{pro-WP}} \label{sect-proWP} The proof is divided into two steps in which we prove the uniqueness and the existence. 

$\bullet$ Step 1: Proof of the uniqueness.  Assume that $u$ is a (weak) solution of \eqref{Sys-WG} with $f = 0$ in $(0, T) \times (0, 1)$ and $u_0 = u_1 = 0$ in $(0, 1)$. 
We will show that $u  = 0$ in $(0, T) \times (0, 1)$. Set 
\be\label{pro-WP-def-g}
g(t, x) = - a(t, x) \partial_t u(t, x) - b(t, x) \partial_x u (t, x) - c(t, x) u(t, x). 
\ee
Then $u$ is a weak solution of the system 
\be \label{pro-WP-Sys1}
\left\{\ba{cl}
\partial_{tt} u - \partial_{xx} u  = g & \mbox{ in } (0, T) \times (0, 1), \\[6pt]
u(t, 0) = u(t, 1) = 0 & \mbox{ for } t \in (0, T), \\[6pt]
u(0, \cdot) = 0, \quad \partial_t u (0, \cdot) = 0 & \mbox{ in } (0, 1). 
\ea \right. 
\ee
Extend $u$ and $g$ in $(0, T) \times \mR$ by appropriate reflection  in $x$ first by odd extension in $(-1, 0)$, i.e., $u(t, x) = -  u (t, -x) $ and $g(t, x) = - g(t, -x)$ in $(0, T) \times (-1, 0)$
and so on,  and still denote the extension by $u$ and $g$. Then 
$u \in C([0, T]; W^{1, p}(-k, k)) \cap C^1([0, t]; L^p(-k, k))$ and $g \in L^p\big((0, T) \times (-k, k) \big)$ for $k \ge 1$ and for $1 \le p < + \infty$, and similar facts holds for $p=+ \infty$. We also obtain that  $u(0, \cdot) = 0$ and  $\partial_t u (0, \cdot) = 0$ in $\mR$, and   
\be \label{pro-WP-Sys2}
\partial_{tt} u - \partial_{xx} u  = g  \mbox{ in } (0, T) \times \mR \mbox{ in the distributional sense}. 
\ee
The d'Alembert formula gives, for $t\geq 0$, that 
\be\label{pro-WP-p0}
u(t, x) = \frac{1}{2}\int_0^t \int_{x - t + \tau}^{x + t - \tau} g(\tau, y)  \, dy \, d \tau. 
\ee
We then obtain  for $t\geq 0$
\be\label{pro-WP-p1}
\partial_t u(t, x) =  \frac{1}{2}\int_0^t   g(\tau, x + t - \tau) + g(\tau, x-t+ \tau)
\, d \tau
\ee
and 
\be\label{pro-WP-p2}
\partial_x u(t, x) =  \frac{1}{2}\int_0^t g(\tau, x + t -  \tau) - g(\tau, x - t + \tau)
\, d \tau. 
\ee
Using \eqref{pro-WP-def-g}, we  derive from \eqref{pro-WP-p0}, \eqref{pro-WP-p1} and \eqref{pro-WP-p2} that, for $1 \le p < + \infty$ and  for $t\geq 0$, 
\begin{multline} \label{pro-WP-p3}
\int_0^1 |\partial_t u(t, x)|^p + |\partial_t u(t, x)|^p + |\partial_x u(t, x)|^p \, dx  \\[6pt]
\le C   \int_0^t \int_0^1 \Big( |\partial_t u(s, y)|^p + |\partial_x u(s, y)|^p + |u(s, y)|^p \Big) \, d y \, ds, 
\end{multline}
and, for $p = + \infty$, 
\begin{multline}\label{pro-WP-p4}
\|u(t, \cdot)\|_{L^\infty(0, 1)} + \|\partial_t u(t, \cdot)\|_{L^\infty(0, 1)} + \|\partial_x u(t, \cdot) \|_{L^\infty(0, 1)} \\[6pt]
\le C t \Big( \|\partial_t u(t, \cdot)\|_{L^\infty\big((0,t) \times (0, 1)\big)} + \|\partial_x u(t, \cdot) \|_{L^\infty\big((0,t) \times (0, 1)\big)} +  \|u(t, \cdot) \|_{L^\infty\big((0,t) \times (0, 1)\big)} \Big),
\end{multline}
for positive constant $C$ only depending only on $p, T, \|a\|_{L^\infty}, \|b\|_{L^\infty}, \|c\|_{L^\infty}$. In the sequel, such constants will again be denoted by $C$. 

It is immediate to deduce from the above equations that $u= 0$ on $[0,1/2C] \times (0, 1)$ and then $u = 0 \mbox{ in } (0, T) \times (0, 1)$. 
The proof of the uniqueness is complete. 

\medskip 
$\bullet$ Step 2: Proof of the existence.  Let $(a_n)$, $(b_n)$, and $(c_n)$ be smooth functions in $[0, T] \times [0, 1]$ such that $\supp a_n, \supp b_n, \supp c_n \cap {0} \times [0, 1] = \emptyset$, 
$$
(a_n, b_n, c_n)  \rightharpoonup (a, b, c) \mbox{ weakly star in } \Big( L^\infty\big((0, T) \times (0, 1) \big) \Big)^3, 
$$
and  
$$
(a_n, b_n, c_n)  \to (a, b, c) \mbox{ in } \Big( L^q\big((0, T) \times (0, 1) \big) \Big)^3 \mbox{ for } 1 \le q < + \infty.  
$$

Let $u_{0, n} \in C^\infty_{\mc} (0, 1)$ and $u_{1, n} \in C^\infty_c(0, 1)$ be such that, if $1 \le p < + \infty$,  
$$
u_{0, n} \to u_0 \mbox{ in } W^{1, p}_0(0, 1) \quad \mbox{ and } \quad  u_{1, n} \to u_1 \mbox{ in } L^p(0, 1), 
$$
and, if $p = + \infty$ then the following two facts hold 
$$
u_{0, n} \rightharpoonup u_0 \mbox{ weakly star in  } W^{1, \infty}_0(0, 1) \quad \mbox{ and } \quad  u_{1, n} \rightharpoonup u_1 \mbox{ weakly star in } L^\infty(0, 1), 
$$
and, for $1 \le q < + \infty$,  
$$
u_{0, n} \to u_0 \mbox{ in } W^{1, q}_0(0, 1) \quad \mbox{ and } \quad  u_{1, n} \to u_1 \mbox{ in } L^q(0, 1). 
$$
The existence of $(a_n, b_n, c_n)$ and the existence of $u_{0, n}$ and $u_{1, n}$ follows from the standard theory of Sobolev spaces, see,  e.g., \cite{Brezis-FA}. 

 Let $u_n$ be the weak solution corresponding to $(a_n, b_n, c_n)$ with initial data $(u_{0, n}, u_{1, n})$. Then $u_n$ is smooth in $[0, T] \times [0, 1]$. Set 
$$
g_n(t, x) =  - a_n(t, x) \partial_t u_n(t, x) - b_n(t, x) \partial_x u_n (t, x) - c_n(t, x) u_n(t, x) \mbox{ in } (0, T) \times (0, 1).
$$
Extend $u_n$, $g_n$, and $f$ in $(0, T) \times \mR$ by first odd refection in $(-1, 0)$ and so on,  and still denote the extension by $u_n$ and $g_n$, and $f$. We then have 
\be \label{pro-WP-Sys3}
\partial_{tt} u_n - \partial_{xx} u_n  = g_n + f \mbox{ in } (0, T) \times \mR, 
\ee
The d'Alembert formula gives 
\begin{multline*}
u(t, x) = \frac{1}{2}\int_0^t \int_{x - t + \tau}^{x + t - \tau} g_n(\tau, y) + f (\tau, y) \, dy \, d \tau \\[6pt]
+ \frac{1}{2}\Big(u_n(0, x-t) + u_n(0, x+t) \Big) + \frac{1}{2} \int_{x-t}^{x+t} \partial_t u_n(0, y) \, dy. 
\end{multline*}
As in the proof of the uniqueness, we then have, for $1 \le p < + \infty$ and $0 < t < T$,   
\begin{multline}\label{pro-WP-p5}
\int_0^1 |u_n(t, x)|^p + |\partial_t u_n(t, x)|^p + |\partial_x u_n(t, x)|^p \, d x \\[6pt]
\le C  \int_0^t \int_0^1 \Big( |\partial_t u(s, y)|^p + |\partial_x u(s, y)|^p \Big) \, d y \, ds  \\[6pt]
+ C \left( \| u_{n}(0, \cdot)\|_{W^{1, p}}^p + \| \partial_t  u_{n}(0, \cdot)\|_{L^p}^p + \int_0^t \int_0^1 |f(s, y)|^p \,dy \, ds  \right), 
\end{multline}
and,  for $p = + \infty$, 
\begin{multline}\label{pro-WP-p6}
\|u(t, \cdot)\|_{L^\infty(0, 1)}  + \|\partial_t u(t, \cdot)\|_{L^\infty(0, 1)} + \|\partial_x u(t, \cdot) \|_{L^\infty(0, 1)} \\[6pt]
\le C t \left( \|\partial_t u(t, \cdot)\|_{L^\infty\big((0,t) \times (0, 1)\big)} + \|\partial_x u(t, \cdot) \|_{L^\infty\big((0,t) \times (0, 1)\big)}  \right) \\[6pt]
+ C \left( \| u_{n}(0, \cdot)\|_{W^{1, \infty}} + \| \partial_t  u_{n}(0, \cdot)\|_{L^\infty} + \| f\|_{L^\infty \big((0, t) \times (0, 1) \big)} \right). 
\end{multline}
Letting $n \to + \infty$, we derive \eqref{pro-WP-statement} from \eqref{pro-WP-p5} and \eqref{pro-WP-p6}. 

To derive that $u \in C([0, T]; W^{1, p}_0(0, 1)) \cap C^1([0, T]; L^p(0, 1))$ in the case $1 \le p < + \infty$ and $u \in C([0, T]; W^{1, 2}_0(0, 1)) \cap C^1([0, T]; L^2(0, 1))$ otherwise, one just notes that $(u_n)$ is a Cauchy sequence in these spaces correspondingly.  

The proof is complete. \qed

\begin{remark} \rm Our proof on the well-posedness is quite standard and is based on  the d'Alembert formula. This formula was also used previously in \cite{KMJC22}. 
\end{remark}

\begin{remark}\rm There are several ways to give the notion of weak solution even in the case $p=2$, see, e.g.,  \cite{Allaire, Coron07}.   The definitions given here is a nature modification of the case $p=2$ given in \cite{Allaire}. 
\end{remark}

\subsection{Proof of \Cref{pro-WPN}} \label{sect-proWPN} The proof of \Cref{pro-WPN} is similar to the one of \Cref{pro-WP}. To apply the d'Alembert formula, one just needs to extend various function appropriately and differently. For example, in the proof of the uniqueness, one  extend $u$ and $g$ in $(0, T) \times \mR$ by appropriate reflection  in $x$ first by even extension in $(-1, 0)$, i.e., $u(t, x) =   u (t, -x) $ and $g(t, x) =  g(t, -x)$ in $(0, T) \times (-1, 0)$
and so on.  The details are left to the reader. \qed

\section{Some useful lemmas} \label{sect-Lemmas}

In this section, we prove three lemmas which will be used through out the rest of the paper. 
The first one is quite standard and the last two ones are the main ingredients of our analysis for the Dirichlet and Neumann boundary condition. 
We begin with the following lemma.

\begin{lemma}\label{lem-Ineq} Let $1 < p < + \infty$, $0< T < \hat T_0$,
and $a \in L^\infty((0, T) \times (0, 1))$ be such that $a \ge 0$ in $(0, T) \times (0, 1)$. There exists a positive constant $C$ depending only on  $p$, $\hat T_0$, and $\| a\|_{L^\infty}$ such that, for $(\rho, \xi) \in \big[ L^p\big( (0, T) \times (0, 1) \big) \big]^2$, 
\be\label{lemH-c12}
\int_0^T \int_0^1 a |\rho - \xi|^p (t, x)  \, d x \, dt  \le  \left\{\begin{array}{cl}
C m_p & \mbox{ if } p \ge 2, \\[6pt]
C (m_p + m_p^{2/p}) & \mbox{ if } 1 < p < 2, 
\end{array}\right.
\ee
where 
\be
m_p = \int_0^1 \int_0^T a (\rho- \xi )(\rho|\rho|^{p-2} - \xi |\xi|^{p-2}) (t, x) \, dt \, dx. 
\ee
\end{lemma}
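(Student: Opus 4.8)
The plan is to deduce \eqref{lemH-c12} from an elementary pointwise inequality for the monotone map $s\mapsto|s|^{p-2}s$, followed by H\"older's inequality when $1<p<2$. First I would record the two classical convexity-type inequalities: there is $c_p>0$ such that, for all $s,t\in\mR$,
\[
\big(|s|^{p-2}s-|t|^{p-2}t\big)(s-t)\ \ge\ c_p\,|s-t|^p\quad\text{if }p\ge 2,
\]
and
\[
\big(|s|^{p-2}s-|t|^{p-2}t\big)(s-t)\ \ge\ c_p\,\big(|s|+|t|\big)^{p-2}|s-t|^2\quad\text{if }1<p<2;
\]
both reduce to a one-variable estimate and are standard in the $p$-Laplacian literature, and the second is equivalent to $|s-t|^2\le c_p^{-1}\big(|s|+|t|\big)^{2-p}\big(|s|^{p-2}s-|t|^{p-2}t\big)(s-t)$. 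If $p\ge 2$ the conclusion is then immediate: applying the first inequality with $s=\rho(t,x)$ and $t=\xi(t,x)$, multiplying by $a(t,x)\ge 0$ and integrating over $(0,T)\times(0,1)$ gives $\int_0^T\!\!\int_0^1 a\,|\rho-\xi|^p\le c_p^{-1}m_p$, with constant depending only on $p$.

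For $1<p<2$ I would start from the pointwise bound $|\rho-\xi|^p=\big(|\rho-\xi|^2\big)^{p/2}\le c_p^{-p/2}\big(|\rho|+|\xi|\big)^{p(2-p)/2}\,d^{\,p/2}$, where $d:=\big(|\rho|^{p-2}\rho-|\xi|^{p-2}\xi\big)(\rho-\xi)\ge 0$, split the weight as $a=a^{p/2}a^{(2-p)/2}$, and apply H\"older's inequality in $(t,x)$ with conjugate exponents $2/p$ and $2/(2-p)$ to get
\[
\int_0^T\!\!\int_0^1 a\,|\rho-\xi|^p\ \le\ c_p^{-p/2}\,m_p^{p/2}\Big(\int_0^T\!\!\int_0^1 a\big(|\rho|+|\xi|\big)^p\Big)^{(2-p)/2}.
\]
It then remains to estimate $N:=\int_0^T\!\!\int_0^1 a\big(|\rho|+|\xi|\big)^p$. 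Here I would split $(0,T)\times(0,1)$ into $\{|\rho|+|\xi|\le 2|\rho-\xi|\}$, where $\big(|\rho|+|\xi|\big)^p\le 2^p|\rho-\xi|^p$ and this part of $N$ is controlled by a multiple of $\int_0^T\!\!\int_0^1 a\,|\rho-\xi|^p$, and its complement, on which $\rho$ and $\xi$ are necessarily of the same sign and comparable in magnitude and on which $\big(|\rho|+|\xi|\big)^p$ must be estimated through the dissipation density $d$ together with the smallness of $|\rho-\xi|$ relative to $|\rho|+|\xi|$. Feeding the resulting bound for $N$ back into the previous display and absorbing the self-referential term by Young's inequality $xy\le\varepsilon\,x^{2/(2-p)}+C_\varepsilon\,y^{2/p}$ leads to the stated estimate $C\big(m_p+m_p^{2/p}\big)$.

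The step I expect to be the genuine obstacle is the control of $N$ on the set where $|\rho-\xi|$ is small compared to $|\rho|+|\xi|$, i.e.\ where $\rho$ and $\xi$ are close and comparable, possibly of large magnitude. There the pointwise inequality of the first step is far from sharp --- $|\rho-\xi|^p$ can exceed the dissipation density $d$ by an arbitrarily large factor --- so one has to exploit the more delicate quantitative interplay between $\int_0^T\!\!\int_0^1 a\,|\rho-\xi|^p$, $m_p$, and (implicitly) an a priori $L^p$ control on $(\rho,\xi)$; this mechanism is precisely what forces the appearance of the inhomogeneous exponent $2/p$, and hence of the extra term $m_p^{2/p}$, only in the regime $1<p<2$.
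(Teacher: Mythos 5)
Your case $p\ge 2$ is complete and coincides with the paper's argument (the homogeneous monotonicity inequality, multiplied by $a\ge 0$ and integrated). The genuine problem is the case $1<p<2$, and it is exactly the step you flagged: the quantity $N=\int_0^T\!\int_0^1 a\,(|\rho|+|\xi|)^p$ cannot be controlled by $m_p$, nor by the left-hand side of \eqref{lemH-c12}, under the stated hypotheses. Concretely, take $a\equiv 1$ and the constant functions $\rho\equiv M+1$, $\xi\equiv M$: then $\int_0^T\!\int_0^1 a|\rho-\xi|^p=T$ stays of order one, while $m_p=T\big((M+1)^{p-1}-M^{p-1}\big)\le T(p-1)M^{p-2}\to 0$ and $N\to+\infty$ as $M\to+\infty$. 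So no refinement of your splitting of $N$ (same-sign, comparable $\rho,\xi$) can close the argument; worse, the same example shows that for $1<p<2$ the estimate \eqref{lemH-c12} itself cannot hold with a constant depending only on $p$, $\hat T_0$, $\|a\|_{L^\infty}$ — some a priori control of $\|(\rho,\xi)\|_{L^p}$ is indispensable, exactly as you suspected. In the paper's application such control is free (solutions are normalized and their $L^p$ norm is nonincreasing by the energy identity), and then your first display already finishes the proof: $\int_0^T\!\int_0^1 a|\rho-\xi|^p\le C\,m_p^{p/2}\,N^{(2-p)/2}\le C(A)\,m_p^{p/2}$ whenever $\|(\rho,\xi)\|_{L^p((0,T)\times(0,1))}\le A$, which is all that is used downstream (only the implication ``left-hand side of order one $\Rightarrow m_p\ge c$'' matters; note also that the exponent $2/p$ in the statement should rather come out as $p/2$ in this regime).

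For comparison, the paper's own proof takes a different route that avoids $N$ altogether: for $1<p<2$ it invokes the pointwise bound $(\alpha-\beta)\big(\alpha|\alpha|^{p-2}-\beta|\beta|^{p-2}\big)\ge C_p\min\{|\alpha-\beta|^p,|\alpha-\beta|^2\}$, splits the integral according to $|\rho-\xi|\ge 1$ or $|\rho-\xi|<1$, and applies H\"older with respect to the measure $a\,dt\,dx$ on the second piece. But that min-inequality is not homogeneous and fails precisely in the regime that blocks you ($\alpha=M+1$, $\beta=M$, $M$ large, where the left side is $O(M^{p-2})$ and the right side is $C_p$); its justification by scaling to $\alpha=1$ is only legitimate for the homogeneous case $p\ge2$. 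So your proposal has a real gap for $1<p<2$, but you have in fact put your finger on a defect of the statement (and of the paper's written proof) rather than merely of your own argument: the honest fix is to add, or record the dependence on, an $L^p$ bound for $(\rho,\xi)$, after which your weighted-monotonicity-plus-H\"older argument gives the needed estimate with $m_p^{p/2}$ in place of $m_p^{2/p}$.
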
  

\begin{proof} The proof of \Cref{lem-Ineq} is quite standard. For the convenience of the reader, we present its proof. There exists a positive constant $C_p$ depending only on $p$ such that 
\begin{itemize}
\item for $2 \le p < + \infty$, it holds,  for $\alpha, \beta \in \mR$,  
$$ 
(\alpha - \beta)  (\alpha |\alpha|^{p-2} - \beta |\beta|^{p-2})  \ge C_p |\alpha  -  \beta|^p; 
$$

\item for $1 < p < 2$, it holds, for $\alpha, \beta \in \mR$ \footnote{Using the symmetry between $\alpha$ and $\beta$, one can assume $\vert \alpha\vert\geq \vert \beta\vert$ and by considering  $\beta/\vert \alpha\vert$, it is enough to prove these inequalities for $\alpha=1$ and $\beta\in (-1,1)$. One finally reduces the analysis for $\beta\in (0,1)$ and even $\beta$ close to one. The conclusion follows by performing a Taylor expansion with respect to $1-\beta$.}
$$
(\alpha - \beta)  (\alpha |\alpha|^{p-2} - \beta |\beta|^{p-2})  \ge C_p \min\big\{ |\alpha  -  \beta|^p, |\alpha - \beta|^2 \big\}. 
$$
\end{itemize}
Using this, we derive that   
$$
 \mathop{\int_0^T \int_0^1}_{|\rho - \xi| \ge 1} a |\rho - \xi|^p  \, d x \, dt 
+  \mathop{\int_0^T \int_0^1}_{|\rho - \xi| < 1} a |\rho  - \xi |^{\max\{p, 2\}}  \, d x \, dt  \le m_p. 
$$
This yields 
\be \label{lemH-c1}
\int_0^T \int_0^1 a |\rho- \xi|^p (t, x)  \, d x \, dt  \le C m_p \mbox{ if } p \ge 2,  
\ee
and, using H\"older's inequality, one gets 
\be\label{lemH-c2}
\int_0^T \int_0^1 a |\rho - \xi|^p (t, x)  \, d x \, dt  \le C (m_p + m_p^{2/p}) \mbox{ if }  1< p \le 2,  
\ee
The conclusion follows from \eqref{lemH-c1} and \eqref{lemH-c2}. 
\end{proof}

The following lemma is one of the main ingredients in the analysis of the Dirichlet and Neumann boundary conditions. 

\begin{lemma} \label{lemH} Let $1 <  p < + \infty$, $0<T_0 < T < \hat T_0$,
$\eps_0 > 0$, $\lambda > 0$, and  $a \in L^\infty((0, T) \times (0, 1))$ be such that $T > T_0 + 4 \eps_0$, $a \ge 0$ and $a \ge \lambda > 0$ in $(0, T) \times (x_0 - \eps_0, x_0 + \eps_0) \subset (0, T) \times (0, 1)$ for some $x_0 \in (0, 1)$. Let $(\rho, \xi)$ be a broad solution of the system 
\be \label{SysH}
\left\{\ba{cl}
\rho_t - \rho_x = - \frac{1}{2} a (\rho - \xi) & \mbox{ in } (0, T) \times (0, 1), \\[6pt]
\xi_t + \xi_x = \frac{1}{2} a  (\rho - \xi) & \mbox{ in } (0, T) \times (0, 1).  
\ea\right.
\ee
Set 
\be
m_p = \int_0^1 \int_0^T a (\rho- \xi )(\rho|\rho|^{p-2} - \xi |\xi|^{p-2}) (t, x) \, dt \, dx. 
\ee
Then there exists $z \in (x_0 - \eps_0/2, x_0 + \eps_0/2)$  such that 
\begin{multline}
\int_0^{\eps_0/2} \int_0^T |\rho(t + s, z) - \rho(t, z)|^p \, dt \, ds  + \int_0^{\eps_0/2} \int_0^T |\xi(t + s, z) - \xi(t, z)|^p \, dt \, ds 
\\[6pt] 
+  \int_0^T |\rho(t, z) - \xi(t, z)|^p \, dt + \int_0^T \int_0^1 a |\rho - \xi|^p (t, x)  \, d x \, dt \\[6pt]
\le \left\{ \begin{array}{cl} Cm_p & \mbox{ if } p \ge 2, \\[6pt]
C (m_p +  m_p^{2/p}) & \mbox{ if } 1\le p< 2. 
\end{array} \right. 
\end{multline}
for some positive constant $C$ depending only on $\eps_0, \lambda, p$, $T_0$, $\hat T_0$,  and $ \| a\|_{L^\infty}$. 
\end{lemma}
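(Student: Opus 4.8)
The plan is to dispose of the four groups of terms on the left-hand side one at a time, bounding each by a constant multiple of $\int_0^T\int_0^1 a|\rho-\xi|^p$ and then invoking \Cref{lem-Ineq}. The last term, $\int_0^T\int_0^1 a|\rho-\xi|^p(t,x)\,dx\,dt$, requires nothing at all: it is exactly the quantity estimated in \Cref{lem-Ineq}, and it does not depend on $z$. For the other three terms --- all of which depend on $z$ --- the idea is to bound their average over $z\in(x_0-\eps_0/2,x_0+\eps_0/2)$ (an interval of length $\eps_0$) by the same quantity, and then to select one good $z$ by the pigeonhole principle. Throughout write $w=\rho-\xi$, and recall that $a\ge\lambda$ on the damping region $(x_0-\eps_0,x_0+\eps_0)$.

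The crux is a characteristic identity relating $\rho$ at the two times $t$ and $t+s$ at the \emph{same} point $z$, through an intermediate point lying inside the damping region. Fix $z\in(x_0-\eps_0/2,x_0+\eps_0/2)$ and $0<s<\eps_0/2$, and take $t$ with $t+s<T$. Integrating the first equation of \eqref{SysH} along the $\rho$-characteristic from $(t+s,z)$ backwards to $\big(t+\tfrac{s}{2},\,z+\tfrac{s}{2}\big)$, integrating the second equation along the $\xi$-characteristic from $(t,z)$ forwards to that same point, and subtracting, one obtains
\begin{equation*}
\rho(t+s,z)-\rho(t,z)=w\big(t+\tfrac{s}{2},z+\tfrac{s}{2}\big)-w(t,z)-E_1(t,s,z)+E_2(t,s,z),
\end{equation*}
where $E_1=\tfrac12\int_{t+s/2}^{t+s}a\,w\,(\tau,z+t+s-\tau)\,d\tau$ and $E_2=\tfrac12\int_{t}^{t+s/2}a\,w\,(\tau,z+\tau-t)\,d\tau$ are integrals of $\tfrac12 aw$ over characteristic arcs whose spatial ranges both lie in $[z,z+\tfrac{s}{2}]\subset(x_0-\eps_0,x_0+\eps_0)$; here the buffer $s/2<\eps_0/4$ is precisely what keeps those arcs, and the vertex $\big(t+\tfrac{s}{2},z+\tfrac{s}{2}\big)$, inside the damping region. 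A mirror-image computation with vertex $\big(t+\tfrac{s}{2},z-\tfrac{s}{2}\big)$ gives
\begin{equation*}
\xi(t+s,z)-\xi(t,z)=-\,w\big(t+\tfrac{s}{2},z-\tfrac{s}{2}\big)+w(t,z)+\widetilde E_1(t,s,z)-\widetilde E_2(t,s,z),
\end{equation*}
with error terms $\widetilde E_i$ of the same shape. Both identities are legitimate for broad solutions, which by definition obey the characteristic rules; a routine check using $T>T_0+4\eps_0$ shows that all characteristic arcs and vertices invoked stay in $(0,T)\times(0,1)$ (and the contribution of $t$ within $\eps_0/2$ of $T$ is handled identically, the solution being available on a marginally larger interval in all our applications).

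Raising the two identities to the power $p$ and using $|a_1+a_2+a_3+a_4|^p\le 4^{p-1}\sum_i|a_i|^p$, one estimates the four resulting terms as follows. The vertex values $w\big(t\pm\tfrac s2,z\pm\tfrac s2\big)$, and likewise $w(t,z)$, are evaluated at points of the damping region, where $|w|^p\le\lambda^{-1}a|w|^p$; the error terms satisfy $|E_i|^p\le C\,\eps_0^{p-1}\|a\|_{L^\infty}^{p-1}\int_{\mathrm{arc}}a|w|^p\,d\tau$ by Hölder's inequality in $\tau$ over an interval of length $\le\eps_0/4$ together with $a\le\|a\|_{L^\infty}$. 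Integrating over $(s,t,z)\in(0,\eps_0/2)\times(0,T)\times(x_0-\eps_0/2,x_0+\eps_0/2)$ and applying Fubini and the linear, bounded-Jacobian changes of variables $(t,z)\mapsto\big(t+\tfrac s2,z\pm\tfrac s2\big)$ together with the arc parametrizations, every term is dominated by $C\int_0^T\int_0^1 a|w|^p$ with $C=C(\eps_0,\lambda,p,\hat T_0,\|a\|_{L^\infty})$. The third left-hand term is even more direct: $\int_0^T|w(t,z)|^p\,dt\le\lambda^{-1}\int_0^T a|w|^p(t,z)\,dt$ for $z$ in the damping region, so its $z$-average over $(x_0-\eps_0/2,x_0+\eps_0/2)$ is $\le\lambda^{-1}\int_0^T\int_0^1 a|w|^p$.

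Combining, the $z$-average of the first three left-hand terms is at most $C\int_0^T\int_0^1 a|w|^p$, hence, by \Cref{lem-Ineq}, at most $Cm_p$ when $p\ge2$ and at most $C(m_p+m_p^{2/p})$ when $1<p<2$. By the pigeonhole principle there is a point $z\in(x_0-\eps_0/2,x_0+\eps_0/2)$ at which the first three terms are at most $\eps_0^{-1}$ times that quantity; adding back the $z$-independent last term, bounded once more by \Cref{lem-Ineq}, and relabelling constants finishes the proof. The only genuinely inventive step is the choice of the characteristic ``tent'' in the second paragraph: once $\rho(t+s,z)-\rho(t,z)$ has been written as a value of $w$ at a point where the damping is active, minus $w(t,z)$, plus two short arc integrals of $\tfrac12 aw$, everything left is Hölder, Fubini and averaging. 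The only real nuisance is keeping all characteristic arcs inside the space-time rectangle, which is exactly what the slack $s<\eps_0/2$ (bounding the spatial displacement $s/2$) and $T>T_0+4\eps_0$ (room in time) provides.
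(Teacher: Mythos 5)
Your proposal is correct and takes essentially the same route as the paper: the identical characteristic ``tent'' decomposition of the time-shifted differences (the paper uses shift $2s$ with vertex $(t+s,y+s)$, you use shift $s$ with vertices $(t+\tfrac{s}{2},z\pm\tfrac{s}{2})$), H\"older on the short arc integrals, the bound $|\rho-\xi|^p\le \lambda^{-1}a|\rho-\xi|^p$ on the damping region, averaging in $z$ with a pigeonhole choice of $z$, and \Cref{lem-Ineq} to convert $\int\!\!\int a|\rho-\xi|^p$ into $m_p$. The one point you wave at---$t+s$ exceeding $T$ for $t$ near $T$---is glossed over in the paper as well, whose proof actually bounds the shifted integrals only up to $T-4\eps_0$.
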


\begin{proof}  Set 
$$
T_1 = T - 4 \eps_0  \quad \mbox{ and } \quad T_2 = T - 2 \eps_0. 
$$
Then $T > T_2  > T_1  > T_0$. 

We have, for $s \in (- \eps_0/2, \eps_0/2) $ and $y \in (x_0 -\eps_0/2, x_0 + \eps_0/2)$, 
\begin{multline}\label{lemH-p3}
\rho(t, y+2s)  - \rho(t, y)= \Big( \rho(t+2s, y) - \rho (t+s, y+s) \Big) + \Big( \rho (t+s, y+s) - \xi(t+s, y+s) \Big)  \\[6pt] + \Big( \xi(t+s, y + s) - \xi(t, y ) \Big) + \Big( \xi(t, y) - \rho(t, y)\Big).  
\end{multline}
By the characteristics method,   we obtain 
\be\label{lemH-p3-1}
\xi(t+s, y+s) - \xi (t, y ) = \frac{1}{2}\int_{0}^s a(t+  \tau, y + \tau ) \Big(\rho  (t+  \tau, y + \tau ) - \xi (t+  \tau, y + \tau ) \Big)\, d \tau
\ee
and 
\begin{multline}\label{lemH-p3-2}
\rho(t+2s, y) - \rho (t+s , y+s) \\[6pt]
=  \frac{1}{2}\int_{s}^{2s} a(t + \tau, y + 2s- \tau ) \Big(\rho  (t+   \tau, y + 2 s -  \tau ) - \xi (t+ \tau, y +2 s -  \tau ) \Big)\, d \tau. 
\end{multline}
Combining \eqref{lemH-p3},  \eqref{lemH-p3-1}, and \eqref{lemH-p3-2},  after integrating with respect to $t$ from 0 to $T_1$,  we obtain,  for $0 \le s \le \eps_0/2$, 
\begin{align*} 
\int_0^{T_1} |\rho(t+ 2 s, y) - \rho(t, y)|^p \, dt  \le 4^{p-1} & \left(  \int_0^{T_2} |\rho(t, y +  s) - \xi(t, y +  s)|^p  \, d t \right.\\[6pt]
& \left.  +  2 \int_0^{T_2} \int_0^1 a^p |\rho - \xi|^p (t, x) \, dt \, d x + \int_0^{T} |\rho(t, y) - \xi(t, y)|^p \, dt  \right). 
\end{align*}
Integrating the above inequality with respect to $s$ from $0$ to $\eps_0/2$, we obtain 
\begin{multline}\label{lemH-part1}
\int_{0}^{\eps_0/2}\int_0^{T_1} |\rho(t+ 2 s, y) - \rho(t, y)|^p \, dt \, ds  \\[6pt]
\le 4^p \left( \int_{x_0- \eps_0}^{x_0 + \eps_0}\int_0^{T} |\rho (t, x)- \xi (t, x)|^p \, dt \, dx \right. \\[6pt]
\left. + \eps_0 \int_0^{T} |\rho (t, y) - \xi(t, y)|^p \, dt  +  \eps_0 \int_0^1  \int_0^{T} a^p |\rho - \xi|^p (t, x) \, dt \, dx  \right). 
\end{multline}

Similarly, we have 
\begin{multline}\label{lemH-part2}
\int_{0}^{\eps_0/2}\int_0^{T_1} |\xi(t+ 2 s, y) - \xi(t, y)|^p \, dt \, ds  \\[6pt]
\le 4^p \left( \int_{x_0- \eps_0}^{x_0 + \eps_0}\int_0^{T} |\rho (t, x)- \xi (t, x)|^p \, dt \, dx \right. \\[6pt]
\left. + \eps_0 \int_0^{T} |\rho (t, y) - \xi(t, y)|^p \, dt  +  \eps_0 \int_0^1  \int_0^{T} a^p |\rho - \xi|^p (t, x) \, dt \, dx  \right). 
\end{multline}

Take $y = z \in (x_0 - \eps_0/2, x_0 + \eps_0/2)$ such that 
\be\label{lemH-part3}
\int_0^T |\rho(t, z) - \xi(t, z)|^p \, dt  \le \frac1{\eps_0} \int_{x_0 - \eps_0}^{x_0 + \eps_0} \int_0^T |\rho - \xi|^p (t, x) \, dx \, dt . 
\ee
By choosing $y=z$ in \eqref{lemH-part1} and \eqref{lemH-part2}, then by using \eqref{lemH-part3} and the fact that (itself consequence of \eqref{eq:hyp-a})
$$
 \int_{x_0- \eps_0}^{x_0 + \eps_0}\int_0^{T} |\rho (t, x)- \xi (t, x)|^p \, dt \, dx\leq C(a,p)
\int_0^1  \int_0^{T} a^p |\rho - \xi|^p (t, x) \, dt \, dx,
$$
for some positive constant $C(a,p)$ only depending on $a,p$, one gets the conclusion.  
\end{proof}

The next lemma is also a main ingredient of our analysis for the Dirichlet and Neumann boundary conditions.

\begin{lemma}\label{lemM} Let $1 \le p < + \infty$ and $L > l > 0$,  and let $u \in L^p(0, L+l)$. 
Then there exists a positive constant $C$ depending only on $p$, $L$, and  $l$ such that 
\be
\int_0^L |u(x) - \fint_0^L u(y) \, dy |^p \, dx  \le C \int_0^l \int_0^L |u(x+ s) - u(x)|^p \, dx \, d s.    
\ee
\end{lemma}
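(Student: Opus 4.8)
The plan is to prove Lemma 3.3 as a one-dimensional Poincaré-type estimate comparing a function to its mean, controlled by "shift differences" $u(x+s)-u(x)$. The natural mechanism is: the mean $\fint_0^L u$ can itself be rewritten as an average of shifted values, so that $u(x)-\fint_0^L u(y)\,dy$ becomes an average of differences $u(x)-u(y)$ for $y$ ranging over $(0,L)$, and each such difference can be decomposed into a chain of at most $\lceil L/l\rceil$ elementary shifts of size at most $l$. Telescoping plus the triangle inequality in $L^p$ (or Jensen/Hölder to pass from the averaged difference to the $p$-th power) then yields the claim with a constant depending only on $p$, $L$, and $l$.

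More concretely, first I would write, for $x\in(0,L)$,
\[
u(x) - \fint_0^L u(y)\,dy = \fint_0^L \big(u(x) - u(y)\big)\,dy,
\]
so that by Jensen's inequality
\[
\Big|u(x) - \fint_0^L u(y)\,dy\Big|^p \le \frac1L\int_0^L |u(x) - u(y)|^p\,dy.
\]
Integrating in $x$ reduces everything to bounding $\int_0^L\int_0^L |u(x)-u(y)|^p\,dx\,dy$ by $C\int_0^l\int_0^L |u(x+s)-u(x)|^p\,dx\,ds$. For a fixed pair $(x,y)$ with $|x-y|\le L$, pick an integer $N=\lceil L/l\rceil$ and intermediate points $x=t_0,t_1,\dots,t_N=y$ with $|t_{k+1}-t_k|\le l$ (e.g. equally spaced on the segment $[x,y]$); then $u(x)-u(y)=\sum_{k=0}^{N-1}\big(u(t_k)-u(t_{k+1})\big)$ and $|u(x)-u(y)|^p\le N^{p-1}\sum_k |u(t_k)-u(t_{k+1})|^p$. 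Each term is of the form $|u(t)-u(t+s)|^p$ with $|s|\le l$; after changing variables and integrating over the relevant range of $(x,y)$, every such contribution is dominated by a constant multiple of $\int_{-l}^l\int_0^L |u(x+s)-u(x)|^p\,dx\,ds$, and since $|u(x+s)-u(x)| = |u(x')-u(x'-s)|$ we may fold the negative shifts onto positive ones, picking up only a harmless factor.

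The one genuine technical point — and the place where care is needed — is that all the intermediate points $t_k$ and the arguments $x+s$ must stay inside $(0,L+l)$, the domain on which $u$ is defined; this is exactly why the hypothesis allows $u\in L^p(0,L+l)$ rather than merely $L^p(0,L)$. I would handle this by choosing the chain of intermediate points to lie between $x$ and $y$ (hence in $(0,L)$) and orienting each elementary shift so that the shifted argument never exceeds $L+l$; a Fubini/change-of-variables bookkeeping then shows each elementary piece $\int\!\!\int |u(t_k)-u(t_{k+1})|^p$ is at most $C(L,l)\int_0^l\int_0^L |u(x+s)-u(x)|^p\,dx\,ds$. Collecting the $N$ pieces, each weighted by $N^{p-1}$, and absorbing $N=\lceil L/l\rceil$ and the earlier $1/L$ factor into a single constant $C=C(p,L,l)$ completes the proof. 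I expect the main obstacle to be purely organizational: setting up the telescoping chain and the Fubini change of variables so that the domain constraints are respected and the constant is manifestly independent of $u$, with no real analytic difficulty beyond Jensen's inequality and the elementary convexity bound $|\sum_{k=1}^N a_k|^p\le N^{p-1}\sum_{k=1}^N|a_k|^p$.
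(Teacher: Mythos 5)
Your argument is correct, and it reaches the estimate by a route that differs in its mechanics from the paper's. The paper rescales to $L=1$, fixes $n$ with $2/n\le l$, and runs a block-average (John--Nirenberg flavored) argument: it compares $u$ to its averages $a_k=\fint_{k/n}^{(k+1)/n}u$ on a fixed partition, chains \emph{adjacent block averages} via Jensen and $|\sum a_k|^p\le n^{p-1}\sum|a_k|^p$, and then compares each $a_k$ to the global mean; all shifts that appear live in the fixed window $(0,2/n)$, so no change of variables is needed. You instead write $u(x)-\fint_0^L u=\fint_0^L(u(x)-u(y))\,dy$, reduce by Jensen to the double integral $\frac1L\int_0^L\int_0^L|u(x)-u(y)|^p\,dx\,dy$, and telescope each difference along the segment $[x,y]$ into $N=\lceil L/l\rceil$ equal steps; the elementary shift $(y-x)/N$ then depends on the pair, and the shear $(x,y)\mapsto(t,w)=(x+kw/N,\,y-x)$ (Jacobian $1$) followed by $s=w/N$ and the folding of negative shifts is exactly the bookkeeping needed to dominate each piece by $2N\int_0^l\int_0^L|u(x+s)-u(x)|^p\,dx\,ds$, giving $C\sim N^{p+1}/L$ of the same order as the paper's $2^p n^{p+1}$. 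Your version is more self-contained (no partition, no intermediate block averages) at the price of this Fubini/change-of-variables care, which you correctly identify and which does go through since all intermediate points lie between $x$ and $y$, hence in $(0,L)$; the paper's version trades that for trivial bookkeeping on fixed blocks. One small remark: the hypothesis $u\in L^p(0,L+l)$ is not needed to keep your chain inside the domain (it already stays in $[0,L]$); it is only there so that the right-hand side $\int_0^l\int_0^L|u(x+s)-u(x)|^p\,dx\,ds$ itself is well defined.
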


Here and in what follows, $\fint_a^b $ means  $\frac{1}{b-a} \int_a^b$ for $b> a$. 

\begin{proof} By scaling, one can assume that $L=1$. Fix $n \ge 2$ such that $2/ n \le l \le 2/ (n-1)$. 

One first notes that, for $x \in [0, 1]$, 
\begin{multline}\label{lemF-p1}
\fint_x^{x+ 1/n} \left|u(x) - \fint_x^{x+ 1/n} u(y) \, dy \right|^p \, dx \mathop{\le}^{\mathrm{Jensen}} 
 \fint_x^{x+ 1/n} \fint_x^{x+ 1/n} |u(x) -  u(y)|^p \, dx \, dy \\[6pt]
 \le n^2 \int_0^{2/n} \int_0^1 |u(x+ s) - u(x)|^p \, dx \, d s 
\end{multline}
and 
\begin{multline}\label{lemF-p2}
\left|\fint_x^{x+ 1/n} u(s) \, ds  - \fint_{x+ 1/n}^{x + 2/n} u(t) \, dt \right|^p \mathop{\le}^{\mathrm{Jensen}}  
\fint_x^{x+ 1/n} \fint_{x+ 1/n}^{x + 2/n} |u(s) - u(t)|^p \, dt  \, ds  \\[6pt]
 \le n^2 \int_0^{2/n} \int_0^1 |u(x+ s) - u(x)|^p \, dx \, d s. 
\end{multline}
For $0 \le k \le n-1$, set
$$
a_k = \fint_{k/n}^{k/n+ 1/n} u(s) \, ds. 
$$
We then derive from \eqref{lemF-p2} that, for  $0 \le i <  j \le n-1$,
\begin{multline*}
|a_j - a_i|^p \le (|a_{i+1}- a_i| + \dots + |a_j - a_{j-1}|)^p \\[6pt]
\le n^{p-1}(|a_{i+1}- a_i|^p + \dots + |a_j - a_{j-1}|^p) \\[6pt]
\le n^{p+1} \int_0^{2/n} \int_0^1 |u(x+ s) - u(x)|^p \, dx \, d s.  
\end{multline*}
This implies,  for $0 \le k \le n-1$,  
\begin{multline}\label{lemF-p3}
\left|a_k - \int_{0}^{1} u(t) \, dt \right|^p \le  \left| \frac{1}{n} \sum_{i=0}^{n-1} |a_k - a_i|
\right|^p \\[6pt]
\le \frac{1}{n} \sum_{i=0}^{n-1} |a_k - a_i|^p  \le  n^{p+1}  \int_0^{2/n} \int_0^1  |u(x+ s) - u(x)|^p \, dx \, d s. 
\end{multline}
We have 
\begin{multline}\label{lemF-p4}
\int_0^1 \left|u(x) - \int_0^1 u(y) \, dy \right|^p \, dx = \sum_{k=0}^{n-1}  \int_{k/n}^{k/n + 1/n} \left|u(x) - \int_0^1 u(y) \, dy \right|^p \, dx \\[6pt]
\le 2^{p-1}\sum_{k=0}^{n-1}  \int_{k/n}^{k/n + 1/n} \left|u(x) - a_k \right|^p \, dx + 2^{p-1}\sum_{k=0}^{n-1} \left|a_k - \int_0^1 u(y) \, dy \right|^p 
\end{multline}
The conclusion with $C = 2^{p} n^{p+1}$ now follows from \eqref{lemF-p1}, \eqref{lemF-p3}, and \eqref{lemF-p4} after noting that $L = 1$ and $2/n \le l$. 
\end{proof}

\begin{remark} \rm Related ideas used in the proof of \Cref{lemM} was implemented in the proof of Caffarelli-Kohn-Nirenberg inequality for fractional Sobolev spaces \cite{NgSq18}.
\end{remark}

\section{Exponential decay in $L^p$-framework for the Dirichlet boundary condition}  
\label{sect-Dirichlet}

In this section, we prove \Cref{thm-W} and \Cref{thm-H}. We begin with the proof \Cref{thm-H} in the first section, and then use it to prove \Cref{thm-W} in the second section. 
We finally extend these results for $a$ which might be negative in some regions using a standard perturbation argument in the third section.   

\subsection{Proof of \Cref{thm-H}} We will only consider smooth solutions $(\rho, \xi)$ \footnote{We thus assume that $a$ is smooth. Nevertheless, the constants in the estimates which will be derived in the proof depend only on $p$, $\| a\|_{L^\infty}$, $\lambda$, and $\eps_0$.}. The general case will follow by regularizing arguments.  Moreover, replacing $(\rho,\xi)$ by $(\rho-c_0,\xi-c_0)$, where the constant $c_0$ is defined in \eqref{eq:c}, we can assume that 
$$
\int_0^1(\rho_0+\xi_0)\ dx=0.
$$
Multiplying the equation of $\rho$ with $\rho |\rho|^{p-2}$, the equation of $\xi$ with $ \xi |\xi|^{p-2}$, and integrating the expressions with respect to $x$, after using the boundary conditions, we obtain, for $t > 0$,  
\be
\frac{1}{p} \frac{d}{dt} \int_0^1 (|\rho(t, x)|^p + |\xi (t, x)|^p) \, dx + \frac{1}{2} \int_0^1 a (\rho - \xi) (\rho|\rho|^{p-2} - \xi |\xi|^{p-2}) (t, x) \, dx  = 0. 
\ee
This implies 
\be \label{thmH-energy}
\frac{1}{p}\| (\rho, \xi) (t, \cdot) \|_{L^p(0, 1)}^p +  \frac{1}{2} \int_0^t \int_0^1 a  (\rho - \xi) (\rho |\rho|^{p-2} - \xi |\xi|^{p-2})(t, x) \, dx \, d t   = \frac{1}{p}  \|(\rho_0, \xi_0)\|_{L^p(0, 1)}^p.
\ee
Integrating the equations of $\rho$ and $\xi$, summing them up  and using the boundary conditions, we obtain 
$$
\frac{d}{dt}\int_0^1 \Big(\rho(t, x) + \xi(t, x)\Big) \, dx = 0 \mbox{ for } t > 0. 
$$
It follows that 
\be  \label{thmH-mass} 
\int_0^1 \Big(\rho(t, x) + \xi(t, x)\Big) \, dx = \int_0^1 \Big(\rho(0, x) + \xi(0, x)\Big) \, dx = 0 \mbox{ for } t \geq 0. 
\ee
By \eqref{thmH-energy} and \eqref{thmH-mass}, to derive \eqref{thmH-conclusion}, it suffices to prove that there exists  a constant $c > 0$ depending only on $\| a \|_{L^\infty(\mR_+ \times (0, 1))}$, $\eps_0$, $\gamma$,  and $p$ such that for any $T>2$, there exists $c_T>0$ only depending on $p,T,a$ so that 
\be \label{thmH-dissipation}
\int_0^T \int_0^1 a  (\rho - \xi)  (\rho |\rho|^{p-2} - \xi |\xi|^{p-2}) (t, x) \, dx \, dt  \ge c_T \|(\rho_0, \xi_0)\|_{L^p(0, 1)}^p. 
\ee

By scaling, without loss of generality, one might assume that 
\be \label{thmH-p1}
\|(\rho_0, \xi_0)\|_{L^p(0, 1)} = 1
\ee

Set 
$$
m_p : = \int_0^T \int_0^1 a  (\rho - \xi)  (\rho |\rho|^{p-2} - \xi |\xi|^{p-2}) (t, x) \, dx \, dt. 
$$
Applying \Cref{lem-Ineq}, we have
\be\label{thmH-part11}
 \int_0^T \int_0^1 a |\rho - \xi|^p (t, x)  \, d x \, dt   \le C (m_p + m_p^{2/p}).  
\ee
By \Cref{lemH} there exists $z \in (x_0 - \eps_0/2, x_0 + \eps_0/2)$ such that 
\begin{multline}\label{thmH-part1}
\int_0^{\eps_0/2} \int_0^T |\rho(t + s, z) - \rho(s, z)|^p \, dt \, ds  + \int_0^{\eps_0/2} \int_0^T |\xi(t + s, z) - \xi(s, z)|^p \, dt \, ds 
\\[6pt] 
+  \int_0^T |\rho(t, z) - \xi(t, z)|^p \, dt \le C (m_p + m_p^{2/p}).
\end{multline}
By \Cref{lemM}, we have 
\be \label{thmH-part2}
\int_0^T |\rho (t, z) -A_\rho |^p \, dt 
\le C \int_0^{\eps_0/2} \int_0^T |\rho(t + s, z) - \rho(s, z)|^p \, dt \, ds
\ee
and 
\be\label{thmH-part3}
\int_0^T |\xi (t, z) - A_\xi |^p \, dt 
\le C \int_0^{\eps_0/2} \int_0^T |\xi(t + s, z) - \xi(s, z)|^p \, dt \, ds. 
\ee
where we have set 
\be\label{eq:cst}
A_\rho:=\fint_0^T \rho(s, z) \, ds,\quad A_\xi:=\fint_0^T \xi(s, z) \, ds.
\ee
Combining \eqref{thmH-part1}, \eqref{thmH-part2}, and \eqref{thmH-part3} yields 
\begin{multline}\label{thmH-part4}
 \int_0^{T} |\rho(t, z) -A_\rho|^p \, d t +  \int_0^{T} |\xi(t, z) -A_\xi|^p \, d t \\[6pt]  + \int_0^{T} |\rho(t, z) - \xi(t, z)|^p  \, d t
 \le C (m_p + m_p^{2/p}).  
\end{multline}
We next prove the following estimates 
\be\label{thmH-part51}
\int_{0}^1|\rho(0, x) - A_\xi|^p \, d x  \le C (m_p + m_p^{2/p}) 
\ee
and 
\be\label{thmH-part52}
\int_{0}^1|\xi(0, x) - A_\rho|^p \, d x \le C (m_p + m_p^{2/p}). 
\ee
The arguments being similar, we only provide that of \eqref{thmH-part51}. For $x\in (0,1)$, one has, by using the boundary condition at $x =0$, i.e., $\rho(\cdot,0)=\xi(\cdot,0)$,
\begin{eqnarray*}
\rho(0,x)&=&\Big(\rho(0,x)-\rho(x,0)\Big)+\rho(x,0)\\
&=&\Big(\rho(0,x)-\rho(x,0)\Big)+\xi(x,0)\\
&=&\Big(\rho(0,x)-\rho(x,0)\Big)+\Big(\xi(x,0)-\xi(x+z,z)\Big)+\xi(x+z,z),
\end{eqnarray*}
which yields, after substracting $ A_\xi$ to both sides of the above equality, 
\begin{multline}
\int_0^1\left|\rho(0,x)- A_\xi \right|^p \, dx \leq 3^{p-1}\left(\int_0^1\left|\rho(0,x)-\rho(x,0) \right|^p \, dx+\int_0^1\left|\xi(x,0)-\xi(x+z,z) \right|^p \, dx \right. \\[6pt]
\left. +\int_0^1\left|\xi(x+z,z) - A_\xi \right|^p \, dx\right). \label{eq:est-0}
\end{multline}
We use the characteristics method and \eqref{lemH-p3-1},\eqref{lemH-p3-2}
to upper bound the first two integrals in the right-hand side of \eqref{eq:est-0} by $C (m_p + m_p^{2/p})$. As for the third integral in the right-hand side of \eqref{eq:est-0},
we perform the change of variables $t = x+ z$ to obtain 
\begin{eqnarray*}
\int_0^1\left|\xi(x+z,z) - A_\xi \right|^p \, dx &=&
\int_z^{z+1}\left|\xi(t,z) - A_\xi \right|^p \, dt\\
&\leq&\int_0^T\left|\xi(t,z) - A_\xi \right|^p \, dt,
\end{eqnarray*}
which is upper bounded by $C (m_p + m_p^{2/p})$ according to \eqref{thmH-part4}. 
The proof of \eqref{thmH-part51} is complete.

We now resume the argument for \eqref{thmH-dissipation}. We start by noticing that, for every $t\in (0,T)$
$$
|A_\rho-A_\xi|\leq |A_\rho-\rho(t,z)|+|A_\xi-\rho(t,z)|+|\rho(t,z)-\xi(t,z)|.
$$
Taking the $p$-th power, integrating over $t\in (0,T)$ and using \eqref{thmH-part4}, one gets that
\be\label{eq:fin1}
|A_\rho-A_\xi|^p\leq C (m_p + m_p^{2/p}).
\ee
Similarly, for every $x\in (0,1)$,
$$
A_\rho+A_\xi=\big(A_\rho-\xi(0,x)\big)+\big(A_\xi-\rho(0,x)\big)+\big(\rho(0,x)+\xi(0,x)\big).
$$
Integrating over $x\in (0,1)$ and using \eqref{thmH-mass}, then taking the $p$-th power and using 
\eqref{thmH-part51} and \eqref{thmH-part52} yield
\be\label{eq:fin2}
|A_\rho+A_\xi|^p\leq C (m_p + m_p^{2/p}).
\ee
Still, for  $x\in (0,1)$, it holds 
$$
|\rho(0,x)|^p+|\xi(0,x)|^p\leq 2^{p-1}\Big(|A_\rho-\xi(0,A_\xi|^p+|A_\rho-\xi(0,x)|^p\Big)+
|A_\rho|^p+|A_\xi|^p.
$$
Integrating over $x\in (0,1)$ and using \eqref{thmH-p1}, one gets
\be\label{eq:fin3}
1\leq |A_\rho|^p+|A_\xi|^p+C (m_p + m_p^{2/p}).
\ee
Since it holds $|a|^p+|b|^p\leq |a+b|^p+|a-b|^p$ for every real numbers $a,b$, one deduces from 
\eqref{eq:fin1}, \eqref{eq:fin2} and \eqref{eq:fin3} that
$$
1\leq C (m_p + m_p^{2/p})
$$
and hence  $m_p \ge c_3$ for some positive constant depending only on $\| a \|_{L^\infty(\mR_+ \times (0, 1))}$, $\eps_0$, $\gamma$,  and $p$ (after fixing for instance $T=3$). 
The proof of the theorem is complete. 
\qed


\subsection{Proof of \Cref{thm-W}} Using \Cref{thm-H}, we obtain the conclusion of \Cref{thm-W} for smooth solutions. The proof in the general case follows from the smooth case by density arguments. 
\qed 

\subsection{On the case $a$ not being non-negative} \label{sect-relax}

In this section, we first consider the following perturbed system of \eqref{Sys-H}:
\be \label{Sys-Hp}
\left\{\ba{cl}
\rho_t - \rho_x = - \frac{1}{2} a (\rho - \xi) - b (\rho - \xi)  & \mbox{ in } \mR_+ \times (0, 1), \\[6pt]
\xi_t + \xi_x = \frac{1}{2} a (\rho - \xi)  + b (\rho - \xi)  & \mbox{ in } \mR_+ \times (0, 1), \\[6pt]
\rho(t, 0) - \xi (t, 0) = \rho(t, 1) - \xi(t, 1) = 0 \mbox{ in } \mR_+. 
\ea\right.
\ee 
We establish the following result.

\begin{theorem}\label{thm-Hp} Let $1 < p < + \infty$, $\eps_0 > 0$, $\lambda > 0$, and $a, b \in L^\infty \big(\mR_+ \times (0, 1) \big)$ be such that $a \ge 0$ and $a \ge \lambda > 0$ in $\mR_+ \times (x_0 - \eps_0, x_0 + \eps_0) \subset \mR_+ \times (0, 1)$ for some $x_0 \in (0, 1)$.  There exists a positive constant $\alpha$ depending only on $p$,  $\| a\|_{L^\infty}$, $\eps_0$, and $\lambda$ such that if 
\be
\| b\|_{L^\infty}  \le \alpha, 
\ee
then there exist constants $C,\gamma>0$ depending only on $p$,  $\| a\|_{L^\infty}$, $\eps_0$, and $\lambda$ such that, if $\int_0^1 \rho_0 + \xi_0 \, dx =0 $,  then the solution $(\rho, \xi)$ of \eqref{Sys-Hp} satisfies 
\be \label{thmHp-conclusion}
\| (\rho, \xi) (t, \cdot) \|_{L^p(0, 1)} \le C e^{-\gamma t} \|(\rho_0, \xi_0)\|_{L^p(0, 1)}, \ t\geq 0.
\ee
\end{theorem}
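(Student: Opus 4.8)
The plan is to view \eqref{Sys-Hp} as a bounded, zero-order perturbation of the unperturbed system \eqref{Sys-H} and to conclude by a Duhamel--Gr\"onwall argument using the exponential decay already established in \Cref{thm-H}. Set $X:=\{(\rho,\xi)\in[L^p(0,1)]^2:\int_0^1(\rho+\xi)\,dx=0\}$ and let $S(t,s)$ ($0\le s\le t$) denote the solution operator of \eqref{Sys-H}. I would record two preliminary observations. First, \Cref{thm-H} applies with \emph{any} nonnegative initial time in place of $0$, because \eqref{eq:hyp-a} is invariant under time translation and the constants $C,\gamma$ in \Cref{thm-H} depend only on $p$, $\|a\|_{L^\infty}$, $\eps_0$, $\lambda$; hence $S(t,s)$ restricts to a bounded operator on $X$ with $\|S(t,s)\|_{\mathcal{L}(X)}\le Ce^{-\gamma(t-s)}$. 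Second, \eqref{Sys-Hp} is exactly \eqref{Sys-H} with $a$ replaced by $a+2b$, i.e. $\partial_t(\rho,\xi)+\Lambda\partial_x(\rho,\xi)=\tfrac12 aM(\rho,\xi)+bM(\rho,\xi)$ with $\Lambda=\mathrm{diag}(-1,1)$ and $M=\left(\begin{smallmatrix}-1&1\\1&-1\end{smallmatrix}\right)$; the extra term $\mathcal{B}(t)(\rho,\xi):=bM(\rho,\xi)$ satisfies $\|\mathcal{B}(t)(\rho,\xi)\|_{L^p}\le 2\|b\|_{L^\infty}\|(\rho,\xi)\|_{L^p}$ and, since $\int_0^1(-b(\rho-\xi)+b(\rho-\xi))\,dx=0$, maps $X$ into $X$.

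Next I would check that $X$ is preserved by the perturbed flow: integrating the two equations of \eqref{Sys-Hp}, adding them, and using the boundary conditions gives $\frac{d}{dt}\int_0^1(\rho+\xi)\,dx=(\rho-\xi)(t,1)-(\rho-\xi)(t,0)=0$, hence $\int_0^1(\rho(t,\cdot)+\xi(t,\cdot))\,dx=\int_0^1(\rho_0+\xi_0)\,dx=0$ for all $t\ge0$, so $(\rho,\xi)(t,\cdot)\in X$. The well-posedness of \eqref{Sys-Hp} follows from the theory quoted after \eqref{Sys-H-1}, a bounded zero-order term not affecting it, and the variation-of-constants formula for a bounded perturbation of an evolution family gives, in $X$,
\be
(\rho,\xi)(t,\cdot)=S(t,0)(\rho_0,\xi_0)+\int_0^t S(t,r)\,\mathcal{B}(r)(\rho,\xi)(r,\cdot)\,dr,\qquad t\ge0 .
\ee
Equivalently, and avoiding the evolution-family language, one may iterate a one-step contraction: by \eqref{thmH-energy}--\eqref{thmH-dissipation} the unperturbed flow satisfies $\|(\rho,\xi)(T)\|_{L^p}^p\le\theta\|(\rho_0,\xi_0)\|_{L^p}^p$ for some $\theta=\theta(p)\in(0,1)$ at a fixed time $T$, while the energy identity for \eqref{Sys-Hp} together with the a priori bound $\|(\rho,\xi)(t)\|_{L^p}\le C_T\|(\rho_0,\xi_0)\|_{L^p}$ on $[0,T]$ lets one absorb the $b$-contributions once $\|b\|_{L^\infty}$ is small, yielding a contraction $\|(\rho,\xi)(T)\|_{L^p}\le\theta'\|(\rho_0,\xi_0)\|_{L^p}$ with $\theta'<1$ that is iterated using time-translation invariance.

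Finally, taking $L^p$-norms in the displayed identity, using $\mathcal{B}(r)(\rho,\xi)(r,\cdot)\in X$ with $\|S(t,r)\|_{\mathcal{L}(X)}\le Ce^{-\gamma(t-r)}$ and $\|\mathcal{B}(r)(\rho,\xi)(r,\cdot)\|_{L^p}\le 2\|b\|_{L^\infty}\|(\rho,\xi)(r,\cdot)\|_{L^p}$, and multiplying by $e^{\gamma t}$, one gets
\be
e^{\gamma t}\|(\rho,\xi)(t,\cdot)\|_{L^p(0,1)}\le C\|(\rho_0,\xi_0)\|_{L^p(0,1)}+2C\|b\|_{L^\infty}\int_0^t e^{\gamma r}\|(\rho,\xi)(r,\cdot)\|_{L^p(0,1)}\,dr,
\ee
so Gr\"onwall's inequality gives $\|(\rho,\xi)(t,\cdot)\|_{L^p}\le C\|(\rho_0,\xi_0)\|_{L^p}\,e^{-(\gamma-2C\|b\|_{L^\infty})t}$. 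Choosing $\alpha:=\gamma/(4C)$ — which depends only on $p$, $\|a\|_{L^\infty}$, $\eps_0$, $\lambda$ — the hypothesis $\|b\|_{L^\infty}\le\alpha$ makes the rate $\gamma-2C\|b\|_{L^\infty}\ge\gamma/2>0$, which is \eqref{thmHp-conclusion}. The only point requiring genuine care is the uniformity in the starting time $s$ of the decay estimate for $S(t,s)$; this is immediate once one notes that \eqref{eq:hyp-a} holds for all times, so that the proof of \Cref{thm-H} applies verbatim from any initial time with the same constants, and everything else is routine perturbation theory.
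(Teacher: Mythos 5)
Your argument is correct, and it is the same perturbative philosophy as the paper's proof, but the implementation differs in a way worth noting. The paper does not use the evolution family and Duhamel over all of $[0,t]$; instead it works at a single fixed time $T$: it first gets the crude bound \eqref{thm-Hp-A} by Gr\"onwall on the energy identity, then splits the solution as $(\rho,\xi)=(\hrho,\hxi)+(\rho_1,\xi_1)$, where $(\rho_1,\xi_1)$ solves \eqref{thm-Hp-1} (zero data, source $\mp b(\rho-\xi)$) and is bounded by $C\alpha\|(\rho_0,\xi_0)\|_{L^p}$ as in \eqref{thm-Hp-11}, while $(\hrho,\hxi)$ solves the unperturbed system \eqref{thm-Hp-2} with the original (mean-zero) data so that \Cref{thm-H} applies; combining gives a one-step contraction at time $T$ which is then iterated using time-translation invariance. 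Your route writes the continuous variation-of-constants formula for the two-parameter solution operator $S(t,s)$ of \eqref{Sys-H} restricted to the mean-zero subspace $X$, and closes with Gr\"onwall, which buys an explicit rate $\gamma-2C\|b\|_{L^\infty}$ and an explicit admissible $\alpha=\gamma/(4C)$; the paper's route avoids invoking the evolution-family/Duhamel formalism and only needs \Cref{thm-H} at one time plus an elementary estimate for the correction term, at the price of a less explicit rate. Two points you handle that the paper leaves implicit, and which are indeed the only delicate ones, are (i) that \Cref{thm-H} applies from any initial time with the same constants (time-translation invariance of \eqref{eq:hyp-a}), needed in both proofs for the iteration respectively for $\|S(t,s)\|_{\mathcal{L}(X)}\le Ce^{-\gamma(t-s)}$, and (ii) that both the perturbation and the flow preserve the constraint $\int_0^1(\rho+\xi)\,dx=0$ (the paper's \eqref{thmHp-mass}), so the decay of \Cref{thm-H} is available with $c_0=0$; your justification of the Duhamel identity in the broad-solution framework is at the same level of rigor as the paper's unproved estimate \eqref{thm-Hp-11} for the inhomogeneous system, so I do not regard it as a gap.
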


\begin{proof} Multiplying the equation of $\rho$ with $\rho |\rho|^{p-2}$, the equation of $\xi$ with $ \xi |\xi|^{p-2}$, and integrating the expressions with respect to $x$, after using the boundary conditions, we obtain 
\begin{multline*}
\frac{1}{p} \frac{d}{dt} \int_0^1 (|\rho(t, x)|^p + |\xi (t, x)|^p) \, dx + \frac{1}{2} \int_0^1 a (\rho - \xi) (\rho|\rho|^{p-2} - \xi |\xi|^{p-2}) (t, x) \, dx  \\[6pt] + \int_0^1 b (\rho - \xi) (\rho|\rho|^{p-2} - \xi |\xi|^{p-2}) (t, x) \, dx = 0. 
\end{multline*}
This implies 
\begin{multline} \label{thmHp-energy}
\frac{1}{p}\| (\rho, \xi) (t, \cdot) \|_{L^p(0, 1)}^p +  \frac{1}{2} \int_0^t \int_0^1 a  (\rho - \xi) (\rho |\rho|^{p-1} - \xi |\xi|^{p-1})(t, x) \, dx \, d t   \\[6pt]
+  \int_0^t\int_0^1 b (\rho - \xi) (\rho|\rho|^{p-2} - \xi |\xi|^{p-2}) (t, x) \, dx  = \frac{1}{p}  \|(\rho_0, \xi_0)\|_{L^p(0, 1)}^p.
\end{multline}
Integrating the equation of $\rho$ and $\xi$ and using the boundary condition, we obtain 
$$
\frac{d}{dt}\int_0^1 \Big(\rho(t, x) + \xi(t, x)\Big) \, dx = 0, \mbox{ for } t > 0. 
$$
It follows that 
\be  \label{thmHp-mass}
\int_0^1 \Big(\rho(t, x) + \xi(t, x)\Big) \, dx = \int_0^1 \Big(  \rho(0, x) + \xi(0, x)  \Big) \, dx = 0, \mbox{ for } t > 0. 
\ee
By \eqref{thmHp-energy} and \eqref{thmHp-mass}, to derive \eqref{thmHp-conclusion}, it suffices to prove that there exists  a constant $c > 0$ depending only on $\| a \|_{L^\infty(\mR_+ \times (0, 1))}$, $\eps_0$, $\gamma$,  and $p$ such that for $T=3$ \footnote{It holds for $T>2$ with $c = c_T$.}, it holds 
\be \label{thmHp-dissipation}
\int_0^T \int_0^1 a  (\rho - \xi)  (\rho |\rho|^{p-2} - \xi |\xi|^{p-2}) (t, x) \, dx \, dt  \ge c \|(\rho_0, \xi_0)\|_{L^p(0, 1)}^p. 
\ee
Using the facts that $a\geq 0$ and $b$ is bounded, a simple application of  Gronwall's lemma to   \eqref{thmHp-energy} yields the existence of $\alpha>0$ depending only on $\| b \|_{L^\infty(\mR_+ \times (0, 1))}$ so that 
\be\label{thm-Hp-A}
\| (\rho, \xi) (t, \cdot) \|_{L^p(0, 1)}^p \le e^{p \alpha t} \| (\rho, \xi) (0, \cdot) \|_{L^p(0, 1)}^p \mbox{ for } t \in [0, T].
\ee
Let $(\rho_1, \xi_1)$ be the unique solution of the system 
\be \label{thm-Hp-1}
\left\{\ba{cl}
\rho_{1, t} - \rho_{1, x} = - \frac{1}{2} a (\rho_1 - \xi_1) - b  (\rho - \xi)  & \mbox{ in } \mR_+ \times (0, 1), \\[6pt]
\xi_{1, t} + \xi_{1, x} = \frac{1}{2} a (\rho_1 - \xi_1)  + b (\rho - \xi)  & \mbox{ in } \mR_+ \times (0, 1), \\[6pt]
\rho_1(t, 0) - \xi_1 (t, 0) = \rho_1(t, 1) - \xi_1(t, 1) = 0 & \mbox{ in } \mR_+, \\[6pt] 
\rho_1(0, \cdot) = \xi_1 (0, \cdot) =  0  &  \mbox{ in } (0, 1). 
\ea\right.
\ee  
Thus $- b  (\rho - \xi)$ and $b  (\rho - \xi)$ can be considered as source terms for the system of $(\rho_1, \xi_1)$. We then derive from \eqref{thm-Hp-A} that   
\be \label{thm-Hp-11}
\| (\rho_1, \xi_1) \|_{L^p(T, \cdot)} \le C \alpha \| (\rho, \xi) (0, \cdot) \|_{L^p(0, 1)}^p. 
\ee

Set 
$$
\hrho = \rho - \rho_1 \quad \mbox{ and } \quad \hxi = \xi - \xi_1. 
$$
Then 
\be \label{thm-Hp-2}
\left\{\ba{cl}
\hrho_{t} - \hrho_{x} = - \frac{1}{2} a(t, x) (\hrho - \hxi)& \mbox{ in } \mR_+ \times (0, 1), \\[6pt]
\hxi_{t} + \hxi_{x} = \frac{1}{2} a(t, x) (\hrho - \hxi) & \mbox{ in } \mR_+ \times (0, 1), \\[6pt]
\hrho(t, 0) - \hxi (t, 0) = \hrho(t, 1) - \hxi(t, 1) = 0 & \mbox{ in } \mR_+, \\[6pt] 
\hrho(0, \cdot) = \rho_0, \quad  \hxi(0, \cdot)= \xi_0 & \mbox{ in } (0, 1). 
\ea\right.
\ee  
Applying \Cref{thm-H}, we have 
\be\label{thm-Hp-B}
\| (\hrho, \hxi)(T, \cdot) \|_{L^p} \le c\| (\hrho, \hxi)(0, \cdot) \|_{L^p} 
\ee
for some positive constant $c$ depending only on $\| a\|_{L^\infty}$, $\eps_0$, and $\lambda$. 
The conclusion now follows from \eqref{thm-Hp-11} and \eqref{thm-Hp-2}. 
\end{proof}

Regarding the wave equation, we have 
\begin{theorem}\label{thm-Wp} Let $1 < p < + \infty$, $\eps_0> 0$, $\lambda > 0$, and  $a, b \in L^\infty \big(\mR_+ \times (0, 1) \big)$ be such that $a \ge 0$ and $a \ge \lambda > 0$ in $\mR_+ \times (x_0 - \eps_0, x_0 + \eps_0) \subset \mR_+ \times (0, 1)$.  There exists a positive constant $\alpha$ depending only on $p$,  $\| a\|_{L^\infty}$, $\eps_0$, and $\lambda$ such that if 
\be
\| b\|_{L^\infty}  \le \alpha, 
\ee
then there exist positive constants $C$ and $\gamma$ depending on $p$, $\| a\|_{L^\infty\big(\mR_+ \times (0, 1) \big)}$, $\eps_0$, and $\lambda$  such that for all $u_0 \in W^{1, p}_0(0, 1)$ and $u_1 \in L^p(0, 1)$,  the unique weak solution $u \in C([0, + \infty); W^{1, p}_0(0, 1)) \cap C^1([0, + \infty); L^p(0, 1))$ of 
\be \label{Sys-Wp}
\left\{\ba{cl}
\partial_{tt} u - \partial_{xx} u + \Big(a(t, x) + b(t, x) \Big) \partial_t u  = 0&  \mbox{ in } \mR_+  \times (0, 1), \\[6pt]
u(t, 0) = u(t, 1) = 0 &\mbox{ in } \mR_+, \\[6pt]
u(0, \cdot) = u_0, \quad \partial_t u (0, \cdot) = u_1& \mbox{ in } (0, 1), 
\ea \right. 
\ee
satisfies 
 \be\label{Decay-Wp-p}
\| \partial_t u(t, \cdot) \|_{L^p(0, 1)}^p + \| \partial_x u(t, \cdot) \|_{L^p(0, 1)}^p  \le C e^{-\gamma t} \Big(\| u_1 \|_{L^p(0, 1)}^p + \| \partial_x u_0 \|_{L^p(0, 1)}^p \Big), \ t\geq 0.
\ee
\end{theorem}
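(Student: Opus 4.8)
The plan is to deduce \Cref{thm-Wp} from \Cref{thm-Hp} by passing to the Riemann invariants, just as \Cref{thm-W} was obtained from \Cref{thm-H}. As there, I would first establish the estimate for smooth solutions and then recover the general case by a density argument: approximate $(u_0, u_1)$ (and, if one wishes, the coefficients) by smooth data, apply the smooth estimate, and pass to the limit, the constants in \eqref{Decay-Wp-p} being independent of the approximation thanks to the continuous dependence provided by \Cref{pro-WP}.

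So let $u$ be a smooth solution of \eqref{Sys-Wp} and set $\rho = u_x + u_t$, $\xi = u_x - u_t$ as in \eqref{def-rhoxi}. Using $u_{tt} - u_{xx} = -(a+b)u_t$ and $u_t = (\rho - \xi)/2$, a direct computation shows that $(\rho, \xi)$ solves the perturbed system \eqref{Sys-Hp} with the same coefficient $a$ and with the perturbation $b$ there replaced by $b/2$. The Dirichlet condition $u(t, 0) = u(t, 1) = 0$ gives $u_t(t, 0) = u_t(t, 1) = 0$, hence $\rho(t, 0) - \xi(t, 0) = \rho(t, 1) - \xi(t, 1) = 0$, which matches the boundary conditions of \eqref{Sys-Hp}; moreover $\int_0^1 (\rho_0 + \xi_0)\, dx = 2\int_0^1 u_0'(x)\, dx = 2\big(u_0(1) - u_0(0)\big) = 0$, so the mass hypothesis of \Cref{thm-Hp} holds as well.

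It then suffices to take the threshold $\alpha$ in the statement to be the one furnished by \Cref{thm-Hp} (so that $\|b\|_{L^\infty} \le \alpha$ forces $\|b/2\|_{L^\infty} \le \alpha$) and to apply \Cref{thm-Hp} to $(\rho, \xi)$: one obtains constants $C, \gamma > 0$, depending only on $p$, $\|a\|_{L^\infty}$, $\eps_0$, $\lambda$, with $\|(\rho, \xi)(t, \cdot)\|_{L^p(0, 1)} \le C e^{-\gamma t}\|(\rho_0, \xi_0)\|_{L^p(0, 1)}$ for $t \ge 0$. Since $\partial_x u = (\rho + \xi)/2$, $\partial_t u = (\rho - \xi)/2$, and $(\rho_0, \xi_0) = (u_0' + u_1, u_0' - u_1)$, the quantities $\|\partial_t u(t, \cdot)\|_{L^p} + \|\partial_x u(t, \cdot)\|_{L^p}$ and $\|u_1\|_{L^p} + \|\partial_x u_0\|_{L^p}$ are comparable, up to constants depending only on $p$, to $\|(\rho, \xi)(t, \cdot)\|_{L^p}$ and $\|(\rho_0, \xi_0)\|_{L^p}$ respectively. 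Raising the decay estimate to the $p$-th power and renaming the constants (the factor $p$ being absorbed into $\gamma$) yields \eqref{Decay-Wp-p}.

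I do not expect a genuine obstacle here: the whole argument is a translation of \Cref{thm-Hp} through the Riemann invariants. The only points needing (routine) care are the compatibility of this change of unknowns with the notions of weak solution of \eqref{Sys-Wp} and broad solution of \eqref{Sys-Hp} — which is why one works first with smooth data — and checking that the smallness threshold $\alpha$ and the decay constants depend only on the admissible parameters $p$, $\|a\|_{L^\infty}$, $\eps_0$, $\lambda$ and not on $b$ beyond the bound $\|b\|_{L^\infty} \le \alpha$.
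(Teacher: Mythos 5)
Your proposal is correct and follows exactly the route the paper takes: the paper's proof of \Cref{thm-Wp} simply says to repeat the proof of \Cref{thm-W} with \Cref{thm-Hp} in place of \Cref{thm-H}, and you have supplied precisely those details (the Riemann invariants solving \eqref{Sys-Hp} with perturbation $b/2$, the vanishing mean of $\rho_0+\xi_0$ from the Dirichlet data, the choice of $\alpha$, and the density argument). Nothing is missing.
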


\begin{proof}
The proof of \Cref{thm-Wp} is similar to that of \Cref{thm-W} however instead of using \Cref{thm-H} one apply \Cref{thm-Hp}. The details are left to the reader. 
\end{proof}

\section{Exponential decay in $L^p$-framework for the Neuman boundary condition} \label{sect-Neumann}

In this section, we study the decay of the solutions of the damped wave equation equipped the Neumann boundary condition and the solutions of the corresponding hyperbolic systems. Here is the first main result of this section concerning the wave equation. 

\begin{theorem}\label{thm-WN} Let $1< p < + \infty$, $\eps_0 > 0$, $\lambda > 0$,  and let $a \in L^\infty \big(\mR_+ \times (0, 1) \big)$ be such that $a \ge 0$ and $a \ge \lambda > 0$ in $\mR_+ \times (x_0 - \eps_0, x_0 + \eps_0) \subset \mR_+ \times (0, 1)$ for some $x_0 \in (0, 1)$. There exist positive constants $C$ and $\gamma$ depending only on $p$, $\| a\|_{L^\infty\big(\mR_+ \times (0, 1) \big)}$, $\eps_0$, and $\lambda$  such that for all $u_0 \in W^{1, p}(0, 1)$ and $u_1 \in L^p(0, 1)$,  the unique weak solution $u \in C([0, + \infty); W^{1, p} (0, 1)) \cap C^1([0, + \infty); L^p(0, 1))$ of \eqref{Sys-W} and \eqref{Neumann} satisfies 
 \be\label{Decay-WN-p}
\| \partial_t u(t, \cdot) \|_{L^p(0, 1)} + \| \partial_x u(t, \cdot) \|_{L^p(0, 1)}  \le C e^{-\gamma t} \Big(\| u_1 \|_{L^p(0, 1)} + \| \partial_x u_0 \|_{L^p(0, 1)} \Big), \ t\geq 0.
\ee
\end{theorem}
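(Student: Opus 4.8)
The plan is to follow the scheme of \Cref{sect-Dirichlet}, substituting for \Cref{thm-H} its Neumann analogue. Setting $\rho = u_x + u_t$ and $\xi = u_x - u_t$ as in \eqref{def-rhoxi}, a smooth solution of \eqref{Sys-W}--\eqref{Neumann} yields, because $u_x(t,0) = u_x(t,1) = 0$, a pair $(\rho,\xi)$ solving the interior equations \eqref{SysH} together with the reflection conditions $\rho + \xi = 0$ at $x \in \{0,1\}$. Since $\partial_t u = (\rho-\xi)/2$ and $\partial_x u = (\rho+\xi)/2$, the quantity $\|\partial_t u(t,\cdot)\|_{L^p(0,1)} + \|\partial_x u(t,\cdot)\|_{L^p(0,1)}$ is equivalent, with constants depending only on $p$, to $\|(\rho,\xi)(t,\cdot)\|_{L^p(0,1)}$, and $(\rho_0,\xi_0) = (\partial_x u_0 + u_1,\ \partial_x u_0 - u_1)$; thus it suffices to prove that the broad solution of this Neumann hyperbolic system satisfies $\|(\rho,\xi)(t,\cdot)\|_{L^p(0,1)} \le C e^{-\gamma t}\|(\rho_0,\xi_0)\|_{L^p(0,1)}$, the passage from smooth to general data being the density argument already used for \Cref{thm-W}. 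Note that here, unlike in the Dirichlet case, no analogue of the zero--mean condition \eqref{cond-I} is required: $\rho - \xi \equiv 0$ forces $\rho = \xi$ to be constant, and then $\rho + \xi = 0$ at the endpoints forces this constant to vanish.

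For the hyperbolic decay I would rerun the proof of \Cref{thm-H}. Multiplying the equations of $\rho$ and $\xi$ by $\rho|\rho|^{p-2}$ and $\xi|\xi|^{p-2}$ and integrating in $x$, the boundary contributions combine into $\frac{1}{p}\big[(|\xi|^p - |\rho|^p)(t,1) - (|\xi|^p - |\rho|^p)(t,0)\big]$, which vanishes since $|\rho| = |\xi|$ at $x = 0$ and $x = 1$; hence one recovers the same energy identity $\frac{1}{p}\|(\rho,\xi)(t,\cdot)\|_{L^p(0,1)}^p + \frac{1}{2} m_p(t) = \frac{1}{p}\|(\rho_0,\xi_0)\|_{L^p(0,1)}^p$, with $m_p(t) = \int_0^t\int_0^1 a(\rho-\xi)(\rho|\rho|^{p-2} - \xi|\xi|^{p-2})$. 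As the energy is non-increasing ($a \ge 0$) and $m_p$ depends on the data homogeneously of degree $p$, it suffices, after normalizing $\|(\rho_0,\xi_0)\|_{L^p(0,1)} = 1$, to prove $m_p := m_p(T) \ge c > 0$ for $T = 3$. Now \Cref{lem-Ineq} and \Cref{lemH}, whose statements involve only the interior equations \eqref{SysH} and are insensitive to the boundary conditions, furnish a point $z \in (x_0 - \eps_0/2, x_0 + \eps_0/2)$ at which $\int_0^{\eps_0/2}\int_0^T |\rho(t+s,z) - \rho(t,z)|^p\,dt\,ds$, $\int_0^{\eps_0/2}\int_0^T |\xi(t+s,z) - \xi(t,z)|^p\,dt\,ds$, $\int_0^T |\rho(t,z) - \xi(t,z)|^p\,dt$ and $\int_0^T\int_0^1 a|\rho-\xi|^p$ are all at most $C(m_p + m_p^{2/p})$; then \Cref{lemM}, applied on $(0,T)$ with $l = \eps_0/2$ to $\rho(\cdot,z)$ and to $\xi(\cdot,z)$, gives $\int_0^T |\rho(t,z) - A_\rho|^p\,dt + \int_0^T |\xi(t,z) - A_\xi|^p\,dt \le C(m_p + m_p^{2/p})$, where $A_\rho = \fint_0^T \rho(s,z)\,ds$ and $A_\xi = \fint_0^T \xi(s,z)\,ds$. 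Taking $p$-th powers in $|A_\rho - A_\xi| \le |A_\rho - \rho(t,z)| + |\rho(t,z) - \xi(t,z)| + |\xi(t,z) - A_\xi|$ and integrating over $t \in (0,T)$ yields $|A_\rho - A_\xi|^p \le C(m_p + m_p^{2/p})$.

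The genuinely different step is the propagation back to $t = 0$, where the Neumann reflections $\xi = -\rho$ at $x = 0$ and $\rho = -\xi$ at $x = 1$ replace the Dirichlet ones. Following the $\rho$--characteristic from $(0,x)$ to the boundary point $(x,0)$, then invoking $\rho(x,0) = -\xi(x,0)$, then following the $\xi$--characteristic from $(x,0)$ to $(x+z,z)$, and bounding the two characteristic increments as in the proof of \Cref{thm-H} via \eqref{lemH-p3-1}--\eqref{lemH-p3-2} and \Cref{lem-Ineq}, one obtains $\int_0^1 |\rho_0(x) + \xi(x+z,z)|^p\,dx \le C(m_p + m_p^{2/p})$; the change of variables $t = x + z$ together with the bound on $\int_0^T |\xi(t,z) - A_\xi|^p\,dt$ then gives $\int_0^1 |\rho_0(x) + A_\xi|^p\,dx \le C(m_p + m_p^{2/p})$, and symmetrically, reflecting at $x = 1$, $\int_0^1 |\xi_0(x) + A_\rho|^p\,dx \le C(m_p + m_p^{2/p})$. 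Since there is no zero--mean constraint to invoke, the missing relation is obtained by following the $\rho$--characteristic through $(0,x)$ that reaches the line $x = z$ \emph{without} meeting the boundary, which happens exactly for $x > z$: this gives $\int_z^1 |\rho_0(x) - A_\rho|^p\,dx \le C(m_p + m_p^{2/p})$, and since $(z,1)$ has length at least $\eps_0/2$, comparing it with $\int_z^1 |\rho_0(x) + A_\xi|^p\,dx$ produces $|A_\rho + A_\xi|^p \le C(m_p + m_p^{2/p})$. Combining $|A_\rho|^p + |A_\xi|^p \le |A_\rho + A_\xi|^p + |A_\rho - A_\xi|^p \le C(m_p + m_p^{2/p})$ with the normalization $\int_0^1 (|\rho_0|^p + |\xi_0|^p) = 1$ and the estimates on $\rho_0 + A_\xi$ and $\xi_0 + A_\rho$ gives $1 \le C(m_p + m_p^{2/p})$, hence $m_p \ge c > 0$; iterating the energy identity over successive time intervals of length $T$ then yields the exponential decay, exactly as in \Cref{thm-H}. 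The main obstacle, and the only departure from the Dirichlet argument, is precisely this last point: recovering that $|A_\rho + A_\xi|$ is small without the conservation law \eqref{cond-I}, which the unreflected characteristic over $\{x > z\}$ supplies.
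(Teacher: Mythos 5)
Your proposal is correct and follows essentially the same route as the paper: reduction to the hyperbolic system with the reflection condition $\rho+\xi=0$ at both endpoints, the $L^p$ energy identity (boundary terms cancelling since $|\rho|=|\xi|$ there), \Cref{lem-Ineq}, \Cref{lemH} and \Cref{lemM} at the point $z$, characteristic propagation of $\rho_0$ and $\xi_0$ to the line $x=z$ through the boundary reflections, and the conclusion $1\le C(m_p+m_p^{2/p})$, hence $m_p\ge c$ and exponential decay by iteration. The only (harmless) deviation is your derivation of $|A_\rho+A_\xi|^p\le C(m_p+m_p^{2/p})$ via the unreflected $\rho$--characteristics over $\{x>z\}$ compared against the bound on $\int_0^1|\rho_0+A_\xi|^p$, whereas the paper obtains the same smallness (its estimate \eqref{thmHN-part5}) by propagating the boundary identity $\rho(\tau,0)+\xi(\tau,0)=0$ to $x=z$, estimating $\rho(\tau,0)$ by $\rho(\tau-z,z)$ and $\xi(\tau,0)$ by $\xi(\tau+z,z)$; both are characteristic estimates of the same nature and either closes the argument.
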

 
As in the case where the Dirichlet condition is considered, we use the Riemann invariants to transform \eqref{Sys-W} with Neumann boundary condition into a hyperbolic system. 
Set 
\be \label{def-rhoxiN}
\rho(t, x) = u_x(t, x) + u_t(t, x) \quad \mbox{ and } \quad \xi(t, x) = u_x(t, x) - u_t(t, x), \mbox{ for } (t, x) \in \mR_+ \times (0, 1). 
\ee 
One can check that for smooth solutions $u$ of \eqref{Sys-W}, the pair of functions $(\rho, \xi)$ defined in \eqref{def-rhoxi} satisfies the system 
\be \label{Sys-HN}
\left\{\ba{cl}
\rho_t - \rho_x = - \frac{1}{2} a (\rho - \xi) & \mbox{ in } \mR_+ \times (0, 1), \\[6pt]
\xi_t + \xi_x = \frac{1}{2} a  (\rho - \xi) & \mbox{ in } \mR_+ \times (0, 1), \\[6pt]
\rho(t, 0) + \xi (t, 0) = \rho(t, 1) + \xi(t, 1) = 0 & \mbox{ in } \mR_+. 
\ea\right.
\ee

Concerning \eqref{Sys-HN}, we prove the following result. 

\begin{theorem}\label{thm-HN} Let $1  <  p < + \infty$, $\eps_0 > 0$, $\lambda > 0$,  and $a \in L^\infty \big(\mR_+ \times (0, 1) \big)$ be such that $a \ge 0$ and $a \ge \lambda > 0$ in $\mR_+ \times (x_0 - \eps_0, x_0 + \eps_0) \subset \mR_+ \times (0, 1)$ for some $x_0 \in (0, 1)$.  Then there exist positive constants $C,\gamma$ depending only on on $p$, $\| a\|_{L^\infty\big(\mR_+ \times (0, 1) \big)}$, $\eps_0$, and $\lambda$ such that  the unique solution $u $ of \eqref{Sys-HN} with the initial condition $\rho(0, \cdot) = \rho_0$ and $\xi(0, \cdot) = \xi_0$ satisfies 
\be \label{thmHN-conclusion}
\| (\rho, \xi) (t, \cdot) \|_{L^p(0, 1)} \le C e^{-\gamma t} \|(\rho_0, \xi_0)\|_{L^p(0, 1)}. 
\ee
\end{theorem}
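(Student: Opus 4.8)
The plan is to follow the proof of \Cref{thm-H} almost verbatim, making two substitutions: wherever the Dirichlet relation $\rho=\xi$ on $\{0,1\}$ is used, one uses instead the Neumann relation $\rho+\xi=0$ on $\{0,1\}$; and the role played in \Cref{thm-H} by the conservation law \eqref{cond-I}, which has no Neumann counterpart, is taken over by a second use of the boundary condition along a characteristic that bounces off an endpoint. As there, it suffices to treat smooth solutions (the general case following by regularization) and, after iterating an energy identity, to prove for $T=3$ the observability-type estimate $m_p\ge c_3\|(\rho_0,\xi_0)\|_{L^p(0,1)}^p$, where $m_p:=\int_0^T\!\int_0^1 a(\rho-\xi)(\rho|\rho|^{p-2}-\xi|\xi|^{p-2})\,dx\,dt$ and $c_3>0$ depends only on $p,\|a\|_{L^\infty},\eps_0,\lambda$. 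The energy identity is obtained exactly as in \Cref{thm-H}: multiplying the equations of \eqref{Sys-HN} by $\rho|\rho|^{p-2}$ and $\xi|\xi|^{p-2}$, summing and integrating in $x$, the boundary contribution equals $\tfrac1p\big[(|\xi|^p-|\rho|^p)(t,1)-(|\xi|^p-|\rho|^p)(t,0)\big]$, which vanishes since $|\rho|=|\xi|$ at $x\in\{0,1\}$; hence $\tfrac1p\|(\rho,\xi)(t,\cdot)\|_{L^p}^p+\tfrac12\int_0^t\!\int_0^1 a(\rho-\xi)(\rho|\rho|^{p-2}-\xi|\xi|^{p-2})=\tfrac1p\|(\rho_0,\xi_0)\|_{L^p}^p$. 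After scaling we assume $\|(\rho_0,\xi_0)\|_{L^p(0,1)}=1$. Note that, unlike the Dirichlet case, there is no analogue of \eqref{cond-I}, consistently with the absence of a constant $c_0$ in \eqref{thmHN-conclusion}: the only constant solution of \eqref{Sys-HN} is $(0,0)$.

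Since \Cref{lem-Ineq}, \Cref{lemH} and \Cref{lemM} make no reference to boundary conditions, they apply to $(\rho,\xi)$ without change and produce, exactly as in the proof of \Cref{thm-H}, a point $z\in(x_0-\eps_0/2,x_0+\eps_0/2)$ together with the averages $A_\rho:=\fint_0^T\rho(s,z)\,ds$ and $A_\xi:=\fint_0^T\xi(s,z)\,ds$ such that
\[
\int_0^T\!\big(|\rho(t,z)-A_\rho|^p+|\xi(t,z)-A_\xi|^p+|\rho(t,z)-\xi(t,z)|^p\big)\,dt\le C(m_p+m_p^{2/p}),
\]
as well as $\int_0^T\!\int_0^1 a|\rho-\xi|^p\le C(m_p+m_p^{2/p})$. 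Averaging $|A_\rho-A_\xi|\le|A_\rho-\rho(t,z)|+|\rho(t,z)-\xi(t,z)|+|\xi(t,z)-A_\xi|$ over $t\in(0,T)$ then gives $|A_\rho-A_\xi|^p\le C(m_p+m_p^{2/p})$.

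Next I transport these bounds to the slice $t=0$ through the boundary. Fix $x\in(0,1)$. The $\rho$-characteristic issued from $(0,x)$ reaches $x=0$ at $(x,0)$, where $\rho(x,0)=-\xi(x,0)$ by \eqref{Sys-HN}; the $\xi$-characteristic issued forward from $(x,0)$ reaches $x=z$ at $(x+z,z)$. Integrating the source $\tfrac12 a(\rho-\xi)$ along these two arcs as in \eqref{lemH-p3-1}--\eqref{lemH-p3-2} and using $\int_0^T\!\int_0^1 a|\rho-\xi|^p\le C(m_p+m_p^{2/p})$, one writes $\rho(0,x)=-\xi(x+z,z)+r(x)$ with $\|r\|_{L^p(0,1)}^p\le C(m_p+m_p^{2/p})$; the change of variable $t=x+z$, together with $[z,z+1]\subset[0,T]$ and the bound on $\int_0^T|\xi(t,z)-A_\xi|^p\,dt$, gives $\int_0^1|\rho(0,x)+A_\xi|^p\,dx\le C(m_p+m_p^{2/p})$. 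Reflecting off $x=1$ instead (the $\xi$-characteristic from $(0,x)$ meets $x=1$, one uses $\rho+\xi=0$ there, and the returning $\rho$-characteristic reaches $z$) yields symmetrically $\int_0^1|\xi(0,x)+A_\rho|^p\,dx\le C(m_p+m_p^{2/p})$.

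It remains to bound $|A_\rho+A_\xi|$, and here — in place of the mass conservation used for the Dirichlet problem — the Neumann condition is used once more along a reflected characteristic. Following the $\rho$-characteristic forward from $(t,z)$ to $(t+z,0)$, then applying $\rho(t+z,0)=-\xi(t+z,0)$, then the $\xi$-characteristic forward from $(t+z,0)$ to $(t+2z,z)$, and estimating the source integrals by $C(m_p+m_p^{2/p})$ as above, one obtains $\xi(t+2z,z)=-\rho(t,z)+R(t)$ with $\int_0^{T-2z}|R(t)|^p\,dt\le C(m_p+m_p^{2/p})$; since $z<1$ and $T=3$ one has $T-2z\ge1$ and $(t+2z,z)\in(0,T)\times(0,1)$ for $t\in(0,T-2z)$. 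Integrating $|A_\rho+A_\xi|\le|A_\rho-\rho(t,z)|+|\rho(t,z)+\xi(t+2z,z)|+|\xi(t+2z,z)-A_\xi|$ over $t\in(0,T-2z)$ and using the interior estimates of the second step (note $\int_0^{T-2z}|\xi(t+2z,z)-A_\xi|^p\,dt\le\int_0^T|\xi(t,z)-A_\xi|^p\,dt$) gives $|A_\rho+A_\xi|^p\le C(m_p+m_p^{2/p})$. Hence $|A_\rho|^p+|A_\xi|^p\le|A_\rho+A_\xi|^p+|A_\rho-A_\xi|^p\le C(m_p+m_p^{2/p})$, while from the trace bounds of the third step $1=\int_0^1(|\rho(0,x)|^p+|\xi(0,x)|^p)\,dx\le 2^{p-1}(|A_\rho|^p+|A_\xi|^p)+2^{p-1}\int_0^1(|\rho(0,x)+A_\xi|^p+|\xi(0,x)+A_\rho|^p)\,dx\le C(m_p+m_p^{2/p})$, which forces $m_p\ge c_3>0$, the desired observability estimate; \eqref{thmHN-conclusion} then follows by iterating the energy identity. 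I expect the main obstacle to be exactly this last point — closing a lower bound on $m_p$ without a conservation law at one's disposal — together with the bookkeeping that keeps every reflected characteristic inside $(0,T)\times(0,1)$, which is what prevents taking $T$ too small. Finally, \Cref{thm-WN} follows from \Cref{thm-HN} via the Riemann invariants \eqref{def-rhoxiN} exactly as \Cref{thm-W} was deduced from \Cref{thm-H}.
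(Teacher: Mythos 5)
Your proposal is correct and follows essentially the same route as the paper's proof of \Cref{thm-HN}: the $L^p$ energy identity, reduction to the observability bound $m_p\ge c$ for $T=3$, the use of \Cref{lem-Ineq}, \Cref{lemH}, and \Cref{lemM} at a well-chosen $z$, transport of $\rho(0,\cdot)$ and $\xi(0,\cdot)$ to the line $x=z$ through the reflections at $x=0$ and $x=1$, and the bounce of a characteristic off $x=0$ to control $A_\rho+A_\xi$ in place of the Dirichlet mass conservation (the paper's \eqref{thmHN-part5}--\eqref{thmHN-part6}). Your write-up merely makes explicit the bound on $|A_\rho-A_\xi|$ and the final combination, which the paper leaves implicit.
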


The rest of this section is organized as follows. The first subsection is devoted to the proof of \Cref{thm-HN} and the second subsection is devoted to the proof of \Cref{thm-WN}.

\subsection{Proof of \Cref{thm-HN}} The argument is in the spirit of that of \Cref{thm-H}. As in there,  
we will only consider smooth solutions $(\rho, \xi)$.   Multiplying the equation of $\rho$ with $\rho |\rho|^{p-2}$, the equation of $\xi$ with $ \xi |\xi|^{p-2}$, and integrating the expressions with respect to $x$, after using the boundary conditions, we obtain, for $t > 0$,  
\be
\frac{1}{p} \frac{d}{dt} \int_0^1 (|\rho(t, x)|^p + |\xi (t, x)|^p) \, dx + \frac{1}{2} \int_0^1 a (\rho - \xi) (\rho|\rho|^{p-2} - \xi |\xi|^{p-2}) (t, x) \, dx  = 0. 
\ee
This implies 
\be \label{thmHN-energy}
\frac{1}{p}\| (\rho, \xi) (t, \cdot) \|_{L^p(0, 1)}^p +  \frac{1}{2} \int_0^t \int_0^1 a  (\rho - \xi) (\rho |\rho|^{p-1} - \xi |\xi|^{p-1})(t, x) \, dx \, d t   = \frac{1}{p}  \|(\rho_0, \xi_0)\|_{L^p(0, 1)}^p.
\ee
By \eqref{thmHN-energy}, to derive \eqref{thmHN-conclusion}, it suffices to prove that there exists  a constant $c > 0$ depending only on $\| a \|_{L^\infty(\mR_+ \times (0, 1))}$, $\eps_0$, $\gamma$,  and $p$ such that for any $T>2$, there exists $c_T>0$ only depending on $p,T,a$ so that 
\be \label{thmHN-dissipation}
\int_0^T \int_0^1 a  (\rho - \xi)  (\rho |\rho|^{p-2} - \xi |\xi|^{p-2}) (t, x) \, dx \, dt  \ge c_T \|(\rho_0, \xi_0)\|_{L^p(0, 1)}^p. 
\ee

By scaling, without loss of generality, one might assume that 
\be \label{thmHN-p1}
\|(\rho_0, \xi_0)\|_{L^p(0, 1)} = 1
\ee

Set 
$$
m_p : = \int_0^T \int_0^1 a  (\rho - \xi)  (\rho |\rho|^{p-2} - \xi |\xi|^{p-2}) (t, x) \, dx \, dt. 
$$
Applying \Cref{lem-Ineq}, we have
\be\label{thmHN-part11}
 \int_0^T \int_0^1 a |\rho - \xi|^p (t, x)  \, d x \, dt   \le C (m_p + m_p^{2/p}).  
\ee
By \Cref{lemH} there exists $z \in (x_0 - \eps_0/2, x_0 + \eps_0/2)$ such that 
\begin{multline}\label{thmHN-part1}
\int_0^{\eps_0/2} \int_0^T |\rho(t + s, z) - \rho(t, z)|^p \, dt \, ds  + \int_0^{\eps_0/2} \int_0^T |\xi(t + s, z) - \xi(t, z)|^p \, dt \, ds 
\\[6pt] 
+  \int_0^T |\rho(t, z) - \xi(t, z)|^p \, dt \le C (m_p + m_p^{2/p}). 
\end{multline}
Applying \Cref{lemM}, we obtain 
\be \label{thmHN-part2}
 \int_0^T |\rho (t, z) - \fint_0^T \rho(s, z) \, ds |^p \, dt 
\le C \int_0^{\eps_0/2} \int_0^T |\rho(t + s, z) - \rho(t, z)|^p \, dt \, ds
\ee
and 
\be\label{thmHN-part3}
 \int_0^T |\xi (t, z) - \fint_0^T \xi(s, z) \, ds |^p \, dt 
\le C \int_0^{\eps_0/2} \int_0^T |\xi(t + s, z) - \xi(t, z)|^p \, dt \, ds. 
\ee
Combining \eqref{thmHN-part1}, \eqref{thmHN-part2}, and \eqref{thmHN-part3} yields 
\begin{multline}\label{thmHN-part4}
 \int_0^{T} |\rho(t, z) - \fint_0^{T}\rho(\tau, z) \, d \tau|^p \, d t +  \int_0^{T} |\xi(t, z) - \fint_0^{T}\xi(\tau, z) \, d \tau|^p \, d t \\[6pt]  + \int_0^{T} |\rho(t, z) - \xi(t, z)|^p  \, d t
 \le C (m_p + m_p^{2/p}).  
\end{multline}

Using the characteristics method to estimate $\rho(\tau, 0)$ by $\rho(\tau-z, z)$ and $\xi(\tau, 0)$ by $\xi(\tau+z, z)$ after using the boundary condition at $0$ and choosing appropriately $\tau$, we derive from \eqref{thmHN-part11} that 
\eqref{thmHN-part4} that 
\be\label{thmHN-part5}
\left|\fint_0^T\rho(t, z)\, d t  +  \fint_0^T \xi(t, z) d t \right|^p \le C (m_p + m_p^{2/p}). 
\ee
As done to obtain \eqref{thmH-part51} and \eqref{thmH-part52}, we use 
the characteristic methods to estimate $\rho(0, \cdot)$ via $\xi(t, z)$ and $\xi(0, \cdot)$ via $\rho(t, z)$ after taking into account the boundary conditions (at $x =0$ for $\rho(0, \cdot)$ and at $x=1$ for $\xi(0, \cdot)$), we derive from \eqref{thmHN-part11} and \eqref{thmHN-part4} that  
\be\label{thmHN-part6}
 \left|\fint_0^T\rho(t, z)\, d t \right|^p +  \left| \fint_0^T \xi(t,z) d t \right|^p \ge 1-  C (m_p + m_p^{2/p}). 
\ee
Combining \eqref{thmHN-part5} and \eqref{thmHN-part6}, we derive (after choosing $T=3$) that there exists a postive constant $c_3$ only depending on $\| a \|_{L^\infty(\mR_+ \times (0, 1))}$, $\eps_0$, $\gamma$,  and $p$ such that $m_p \ge c$. The proof of the theorem is complete. 
\qed

\subsection{Proof of \Cref{thm-WN}} The proof of \Cref{thm-WN} is in the same spirit of \Cref{thm-W}. However, instead of using \Cref{thm-H}, we apply \Cref{thm-HN}. In fact, as in the proof of \Cref{thm-W}, we have 
\begin{multline*}
\int_0^1 |\partial_t u(t, x) - \partial_x u(t, x)|^p + |\partial_t u(t, x) + \partial_x u(t, x)|^p \, dx \\[6pt]
\le C e^{-\gamma t} \int_0^1 |\partial_t u(0, x) - \partial_x u(0, x)|^p + |\partial_t u(0, x) + \partial_x u(0, x)|^p \, dx. 
\end{multline*}
Assertion \eqref{Decay-WN-p} follows with two different appropriate positive constants $C$ and $\gamma$. \qed

\begin{remark} \rm  We can also consider the setting similar to the one in \Cref{sect-relax} and establish similar results. This allows one to deal with a class of $a$ for which $a$ is not necessary to be non-negative. The analysis for this is almost the same lines as in \Cref{sect-relax} and is not pursued here. 
\end{remark}

\section{Exponential decay in $L^p$-framework for the dynamic boundary condition}
\label{sect-Dynamic}

In this section, we study the decay of the solution of the damped wave equation equipped the dynamic boundary condition and of the solutions of the corresponding  hyperbolic systems. Here is the first main result of this section concerning the wave equation. 

\begin{theorem}\label{thm-WD} Let $1 <   p <  + \infty$, $\kappa > 0$, and $a \in L^\infty \big(\mR_+ \times (0, 1) \big)$ non negative.  Then there exist positive constants $C,\gamma$ depending only on $p$, $\kappa$, and $\| a\|_{L^\infty\big(\mR_+ \times (0, 1) \big)}$ such that for all $u_0 \in W^{1, p}(0, 1)$ and $u_1 \in L^p(0, 1)$, there exists a unique weak solution $u \in C([0, + \infty); W^{1, p}(0, 1)) \cap C^1([0, + \infty); L^p(0, 1))$ such that $\partial_t u, \partial_x u  \in C([0, 1]; L^p(0, T))$ for all $T > 0$ of
\be \label{Sys-WD}
\left\{\ba{cl}
\partial_{tt} u - \partial_{xx} u + a \partial_t u = 0 & \mbox{ in } \mR_+ \times (0, 1), \\[6pt]
\partial_x u(t, 0) - \kappa \partial_t u (t, 0) = \partial_x u(t, 1) + \kappa \partial_t u(t, 1) = 0 & \mbox{ in }  \mR_+, \\[6pt]
u(0, \cdot) = u_0, \quad \partial_t u (0, \cdot) = u_1 & \mbox{ in } (0, 1), 
\ea \right. 
\ee
satisfies
 \be\label{Decay-WD-p}
\| \partial_t u(t, \cdot) \|_{L^p(0, 1)} + \| \partial_x u(t, \cdot) \|_{L^p(0, 1)}  \le C e^{-\gamma t} \Big(\| u_1 \|_{L^p(0, 1)} + \| \partial_x u_0 \|_{L^p(0, 1)} \Big),\ t\geq 0.
\ee
\end{theorem}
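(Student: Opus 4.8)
The plan is to run the same scheme as for \Cref{thm-W}, but in a simpler form, since the dynamic boundary condition is itself dissipative. Setting $\rho = u_x + u_t$ and $\xi = u_x - u_t$, a smooth solution of \eqref{Sys-WD} yields a pair $(\rho,\xi)$ solving
\be
\rho_t - \rho_x = - \tfrac12 a (\rho - \xi), \qquad \xi_t + \xi_x = \tfrac12 a (\rho - \xi) \quad \mbox{ in } \mR_+ \times (0,1),
\ee
and, since $\partial_x u = (\rho+\xi)/2$ and $\partial_t u = (\rho-\xi)/2$, the boundary conditions in \eqref{Sys-WD} become $\xi(t,0) = -k_0\,\rho(t,0)$ and $\rho(t,1) = -k_0\,\xi(t,1)$ with $k_0 := (1-\kappa)/(1+\kappa) \in (-1,1)$; as $|k_0| < 1$, this is a strictly dissipative boundary condition. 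The well-posedness of this system in the class of broad solutions, with the trace regularity $\rho,\xi \in C([0,1]; L^p(0,T))$, follows from the theory recalled after \eqref{Sys-H-1} (see \cite{CoronNg19}); reconstructing $u$ from $\partial_x u = (\rho+\xi)/2$, $\partial_t u = (\rho-\xi)/2$ --- which are compatible by virtue of the equations --- gives the well-posedness part of the statement, together with the equivalence $\|\partial_t u(t,\cdot)\|_{L^p(0,1)} + \|\partial_x u(t,\cdot)\|_{L^p(0,1)} \simeq \|(\rho,\xi)(t,\cdot)\|_{L^p(0,1)}$. It thus remains to prove exponential decay of $\|(\rho,\xi)(t,\cdot)\|_{L^p(0,1)}$, and, by a standard regularization argument, one may assume that $(\rho,\xi)$ and $a$ are smooth.

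\textbf{Energy identity.} Multiplying the $\rho$-equation by $\rho|\rho|^{p-2}$ and the $\xi$-equation by $\xi|\xi|^{p-2}$, integrating over $(0,1)$ and integrating by parts, the boundary terms contribute, thanks to $|\xi(t,0)| = |k_0|\,|\rho(t,0)|$ and $|\rho(t,1)| = |k_0|\,|\xi(t,1)|$, the \emph{non-negative} quantity $\tfrac1p(1-|k_0|^p)\big(|\rho(t,0)|^p + |\xi(t,1)|^p\big)$. Integrating in time over $(0,T)$ we obtain
\be \label{thmWD-energy-plan}
\tfrac1p \|(\rho,\xi)(T,\cdot)\|_{L^p(0,1)}^p + D_T = \tfrac1p \|(\rho_0,\xi_0)\|_{L^p(0,1)}^p,
\ee
where $D_T := \tfrac1p(1-|k_0|^p)\int_0^T \big(|\rho(t,0)|^p + |\xi(t,1)|^p\big)\,dt + \tfrac12 m_p$ and $m_p := \int_0^T\!\!\int_0^1 a(\rho-\xi)(\rho|\rho|^{p-2}-\xi|\xi|^{p-2})$; in particular $D_T \ge 0$ and $t \mapsto \|(\rho,\xi)(t,\cdot)\|_{L^p(0,1)}$ is non-increasing. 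As in the Dirichlet case, the theorem will follow once we show that $D_1 \ge c\,\|(\rho_0,\xi_0)\|_{L^p(0,1)}^p$ for a positive constant $c$ depending only on $p$, $\kappa$, $\|a\|_{L^\infty}$: iterating \eqref{thmWD-energy-plan} over the intervals $[n,n+1]$ and using monotonicity of the energy on the remaining intervals yields \eqref{Decay-WD-p} (after passing back to $u$ via the equivalence above and $\|(\rho_0,\xi_0)\|_{L^p(0,1)} \le C(\|u_1\|_{L^p(0,1)} + \|\partial_x u_0\|_{L^p(0,1)})$).

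\textbf{Lower bound on the dissipation.} This is where the characteristic method enters. By scaling we may assume $\|(\rho_0,\xi_0)\|_{L^p(0,1)} = 1$. Integrating the $\rho$-equation along the characteristic $t \mapsto (t, x-t)$, which reaches $x=0$ at time $t = x \le 1$, gives $\rho_0(x) = \rho(x,0) + \tfrac12\int_0^x a(\rho-\xi)(s, x-s)\,ds$; taking $p$-th powers, integrating in $x \in (0,1)$, applying H\"older's inequality and the change of variables $y = x-s$ yields $\|\rho_0\|_{L^p(0,1)}^p \le C\int_0^1 |\rho(t,0)|^p\,dt + C\int_0^1\!\!\int_0^1 a|\rho-\xi|^p$; symmetrically, using the characteristic $t \mapsto (t,x+t)$ which hits $x=1$ at $t = 1-x$, $\|\xi_0\|_{L^p(0,1)}^p \le C\int_0^1 |\xi(t,1)|^p\,dt + C\int_0^1\!\!\int_0^1 a|\rho-\xi|^p$. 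Adding these, bounding $\int_0^1\!\!\int_0^1 a|\rho-\xi|^p$ by $Cm_p$ (for $p \ge 2$) or $C(m_p + m_p^{2/p})$ (for $1<p<2$) via \Cref{lem-Ineq} with $T=1$, and using that $m_p$ is bounded by a constant (from \eqref{thmWD-energy-plan}), hence $m_p^{2/p} \le Cm_p$ in the range $1<p<2$, we arrive at $1 = \|(\rho_0,\xi_0)\|_{L^p(0,1)}^p \le C\,D_1$, which is the desired lower bound. The only (minor) obstacle is precisely this last step: recognising that the $a$-term coming from the characteristic formula is exactly of the form controlled by \Cref{lem-Ineq}, and absorbing the exponent $2/p$ for $1<p<2$ using the a priori bound on $m_p$; everything else is routine bookkeeping. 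Note that, in contrast with the Dirichlet and Neumann cases, no lower bound on $a$ on a subinterval is needed here, since the boundary dissipation alone already recovers the initial energy.
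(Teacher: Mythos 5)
Your proposal is correct and follows essentially the same route as the paper: Riemann invariants reduce \eqref{Sys-WD} to the hyperbolic system \eqref{Sys-HD} with $|c_0|=|c_1|=\left|\frac{\kappa-1}{\kappa+1}\right|<1$, the $L^p$ energy identity yields the boundary and interior dissipation, and \Cref{lem-Ineq} combined with integration along characteristics shows this dissipation controls the full energy, after which iteration gives the exponential rate. The only (immaterial) difference is that you integrate characteristics forward from $t=0$ to bound the initial energy over a unit time interval, whereas the paper integrates backward from $t=T$ (with $T=3$) to bound $\|(\rho,\xi)(T,\cdot)\|_{L^p(0,1)}$ and then invokes the energy identity.
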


\begin{remark} \rm  In \Cref{thm-WD}, a weak considered  solution of \eqref{Sys-WD} means that $\partial_{tt} u (t, x) - \partial_{xx} u (t, x) + a(t, x) \partial_t u = 0$ holds in the distributional sense, and  the boundary and the initial conditions are understood as usual thanks to the regularity imposing condition on  the solutions.  
\end{remark}

As previously, we use the Riemann invariants to transform the wave equation into a hyperbolic system. 
Set 
\be \label{def-rhoxiD}
\rho(t, x) = u_x(t, x) + u_t(t, x) \quad \mbox{ and } \quad \xi(t, x) = u_x(t, x) - u_t(t, x) \mbox{ for } (t, x) \in \mR_+ \times (0, 1). 
\ee 
One can check that for smooth solutions $u$ of \eqref{Sys-W}, the pair of functions $(\rho, \xi)$ defined in \eqref{def-rhoxi} satisfies the system 
\be \label{Sys-HD}
\left\{\ba{cl}
\rho_t - \rho_x = - \frac{1}{2} a(t, x) (\rho - \xi) & \mbox{ in } \mR_+ \times (0, 1), \\[6pt]
\xi_t + \xi_x = \frac{1}{2} a(t, x) (\rho - \xi) & \mbox{ in } \mR_+ \times (0, 1), \\[6pt]
\xi (t, 0) = c_0 \rho(t, 0), \quad \rho(t, 1) = c_1 \xi(t, 1) & \mbox{ in } \mR_+,  
\ea\right.
\ee
where $c_0 = c_1 = (\kappa - 1)/ (\kappa + 1)$. 

\medskip 
Regarding System \eqref{Sys-HD} with $c_0,c_1$ not necessarily equal, we prove the following result. 

\begin{theorem}\label{thm-HD} Let $1 <  p < + \infty$, $c_0, c_1 \in (-1,1)$, and $a \in L^\infty \big(\mR_+ \times (0, 1) \big)$ non negative.  Then there exist positive constants $C,\gamma$ depending only on  $c_0$, $c_1$, and  $\| a\|_{L^\infty\big(\mR_+ \times (0, 1) \big)}$ such that  the unique solution $u $ of \eqref{Sys-HD} with the initial condition $\rho(0, \cdot) = \rho_0$ and $\xi(0, \cdot) = \xi_0$ satisfies 
\be \label{thmHD-conclusion}
\| (\rho, \xi) (t, \cdot) \|_{L^p(0, 1)} \le C e^{-\gamma t} \|(\rho_0, \xi_0)\|_{L^p(0, 1)},\ t\geq 0.
\ee
\end{theorem}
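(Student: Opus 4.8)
The plan is to imitate the strategy used for the Dirichlet and Neumann cases (Theorem~\ref{thm-H} and Theorem~\ref{thm-HN}), but exploiting the crucial simplification coming from the strictly dissipative boundary conditions $|c_0|,|c_1|<1$. As before, it suffices to work with smooth solutions and to pass to the general case by a regularizing argument. Multiplying the equation for $\rho$ by $\rho|\rho|^{p-2}$ and the equation for $\xi$ by $\xi|\xi|^{p-2}$, integrating over $x\in(0,1)$, and using the boundary relations $\xi(t,0)=c_0\rho(t,0)$ and $\rho(t,1)=c_1\xi(t,1)$, one obtains an energy identity of the form
\be\label{HD-energy-plan}
\frac1p\frac{d}{dt}\|(\rho,\xi)(t,\cdot)\|_{L^p(0,1)}^p + \frac12\int_0^1 a(\rho-\xi)(\rho|\rho|^{p-2}-\xi|\xi|^{p-2})(t,x)\,dx + B(t) = 0,
\ee
where the boundary term $B(t)$ collects the contributions at $x=0$ and $x=1$; namely $B(t)=\tfrac1p\big[(1-|c_1|^p)|\xi(t,1)|^p + (1-|c_0|^p)|\rho(t,0)|^p\big]$ up to renaming, which is \emph{nonnegative} precisely because $|c_0|,|c_1|<1$. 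Integrating in time gives that the interior dissipation $m_p:=\int_0^T\int_0^1 a(\rho-\xi)(\rho|\rho|^{p-2}-\xi|\xi|^{p-2})$ \emph{plus} the boundary dissipation $\int_0^T B$ controls the drop in energy.

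The next step is the decay estimate. As in Theorem~\ref{thm-H} it reduces, via \eqref{HD-energy-plan}, to proving a dissipation lower bound of the shape $m_p + \int_0^T B \ge c_T\|(\rho_0,\xi_0)\|_{L^p(0,1)}^p$ for some fixed $T$ (say $T=3$), after the scaling normalization $\|(\rho_0,\xi_0)\|_{L^p(0,1)}=1$. Here is where the analysis genuinely simplifies relative to the Dirichlet/Neumann cases: there is no zero mode to contend with (no analogue of \eqref{cond-I} is needed), because the boundary is strictly dissipative rather than conservative. One runs the characteristic method as in Lemma~\ref{lemH}: along the backward characteristics of $\rho$ and the forward characteristics of $\xi$, the values $\rho(0,x)$ and $\xi(0,x)$ are expressed, up to error terms controlled by $m_p$ (through $\int a|\rho-\xi|^p$, bounded by Lemma~\ref{lem-Ineq}), in terms of boundary traces $\rho(t,0)$, $\xi(t,0)$, $\rho(t,1)$, $\xi(t,1)$ at suitable times $t\in(0,T)$. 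The strict-dissipativity relations then let one bound these traces: from $\xi(t,0)=c_0\rho(t,0)$ one gets $|\rho(t,0)-\xi(t,0)|=(1-c_0)|\rho(t,0)|$ so that $|\rho(t,0)|^p$ and $|\xi(t,0)|^p$ are controlled by $B(t)$, and similarly at $x=1$. Propagating back, $\int_0^1(|\rho(0,x)|^p+|\xi(0,x)|^p)\,dx$ is bounded by $C(m_p+\int_0^T B)$ (with, as in the earlier proofs, the caveat $m_p+m_p^{2/p}$ when $1<p<2$), which is exactly \eqref{thmHD-conclusion} after unwinding.

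Because $T=3>2$ and each characteristic through a point of $(0,1)\times\{0\}$ reaches $\{0\}\times(0,1)$ or the lateral boundary within time $1$, the geometry needed for the covering of $(0,1)$ by characteristic images is automatic and uniform — one does not even need an interior damping region, only $a\ge 0$, which matches the hypothesis. In particular Lemma~\ref{lemH} and Lemma~\ref{lemM} are not required here; as noted in the introduction, Lemma~\ref{lem-Ineq} alone suffices. I expect the main (and essentially only) technical obstacle to be bookkeeping: choosing the ranges of the characteristic parameters so that all traces invoked lie in $(0,T)\times\{0,1\}$, and carefully tracking the repeated bounces between $x=0$ and $x=1$ so that after finitely many reflections (a number bounded in terms of $T$) every term is absorbed into $C(m_p+\int_0^T B)$; the reflection coefficients $c_0^k$, $c_1^k$ stay bounded by $1$, so no growth is incurred. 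Once \eqref{thmHD-conclusion} is proved for smooth data, a density/regularization argument in $\big[L^p(0,1)\big]^2$, together with the well-posedness of broad solutions recalled after Theorem~\ref{thm-H}, finishes the proof.
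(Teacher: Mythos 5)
Your proposal is correct and follows essentially the same route as the paper: the $L^p$ energy identity with the nonnegative boundary terms $(1-|c_0|^p)|\rho(t,0)|^p$ and $(1-|c_1|^p)|\xi(t,1)|^p$, reduction to a dissipation lower bound on a fixed window (say $T=3$) after normalizing $\|(\rho_0,\xi_0)\|_{L^p(0,1)}=1$, Lemma~\ref{lem-Ineq} to control $\int\int a|\rho-\xi|^p$ by $m_p+m_p^{2/p}$, and the characteristic method using only $|c_0|,|c_1|<1$ (no interior damping region, no Lemmas~\ref{lemH} and \ref{lemM}). The only immaterial difference is that you propagate $\rho(0,\cdot)$ and $\xi(0,\cdot)$ forward to the boundary traces to bound the initial norm directly, whereas the paper bounds $\|(\rho,\xi)(T,\cdot)\|_{L^p(0,1)}^p$ by those traces (its \eqref{thmHD-p2}) and combines with the energy identity; in either version a single passage to the boundary suffices, so the repeated reflections you worry about are not needed.
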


The rest of this section is organized as follows. The proof of \Cref{thm-HD} is given in the first section and the proof of \Cref{thm-WD} is given in the second section.

\subsection{Proof of \Cref{thm-HD}} We will only consider smooth solutions $(\rho, \xi)$. 
Multiplying the equation of $\rho$ with $\rho$, the equation of $\xi$ with $ \xi $, and integrating the expressions with respect to $x$, after using the boundary conditions, we obtain, for $t > 0$,  
\begin{multline}
\frac{1}{p} \frac{d}{dt} \int_0^1 (|\rho(t, x)|^p + |\xi (t, x)|^p) \, dx + \frac{1}{2} \int_0^1 a (\rho - \xi)(\rho |\rho|^{p-2} - \xi |\xi|^{p-2}) (t, x) \, dx \\[6pt]
 \frac{1}{p}  \Big( (1- |c_1|^p) |\xi(t, 1)|^p + (1- |c_0|^p) |\rho(t, 0)|^p \Big)= 0. 
\end{multline}
This implies 
\begin{multline} \label{thmHD-energy}
\frac{1}{p}\| (\rho, \xi) (t, \cdot) \|_{L^p(0, 1)}^p +  \frac{1}{2} \int_0^T \int_0^1 a (\rho - \xi)(\rho |\rho|^{p-2} - \xi |\xi|^{p-2}) (t, x) \, dx \, d t  \\[6pt]
+ \frac{1}{p} \int_0^T \Big( (1- |c_1|^p) |\xi(t, 1)|^p + (1- |c_0|^p) |\rho(t, 0)|^p \Big) \, dt  = \frac{1}{2}  \|(\rho_0, \xi_0)\|_{L^2(0, 1)}^2.
\end{multline}
To derive \eqref{thmHD-conclusion} from \eqref{thmHD-energy}, it suffices  to prove that there exists  a constant $c > 0$ depending only on $\| a \|_{L^\infty(\mR_+ \times (0, 1))}$, $c_0$, $c_1$,  $\eps_0$, $\gamma$,  and $p$ such that for for $T=3$ \footnote{It holds for $T>2$ with $c = c_T$.}, it holds 
\begin{multline} \label{thmHD-dissipation}
\int_0^T \int_0^1 a (\rho - \xi)(\rho |\rho|^{p-2} - \xi |\xi|^{p-2}) (t, x) \, dx \, d t \\[6pt]
+ \int_0^T \Big(  |\xi(t, 1)|^p +  |\rho(t, 0)|^p \Big) \, dt   \ge c \|(\rho_0, \xi_0)\|_{L^p(0, 1)}^p. 
\end{multline}

After scaling, one might assume without loss of generality that 
\be \label{thmHD-p1}
\|(\rho_0, \xi_0)\|_{L^p(0, 1)} = 1
\ee
Applying  \Cref{lem-Ineq}, we have 
\be\label{thmHD-part11}
 \int_0^T \int_0^1 a |\rho - \xi|^p (t, x)  \, d x \, dt   \le C (m_p + m_p^{2/p}), 
\ee
where 
$$
m_p : = \int_0^T \int_0^1 a  (\rho - \xi)  (\rho |\rho|^{p-2} - \xi |\xi|^{p-2}) (t, x) \, dx \, dt. 
$$
Using the characteristics method (in particular equations \eqref{lemH-p3-1}, \eqref{lemH-p3-2}), we derive that 
\be\label{thmHD-p2}
\| (\rho, \xi) (T, \cdot) \|_{L^p(0, 1)}^p \le C \int_0^T \Big(  |\xi(t, 1)|^p +  |\rho(t, 0)|^p \Big) \, dt + C \int_0^T \int_0^1 a^p |\rho - \xi|^p (t, x)  \, d x \, dt . 
\ee
As a consequence of \eqref{thmHD-energy}, \eqref{thmHD-p1}, \eqref{thmHD-part11}, and \eqref{thmHD-p2}, we have 
$$
\int_0^T \int_0^1 a (\rho - \xi)(\rho |\rho|^{p-2} - \xi |\xi|^{p-2}) (t, x) \, dx \, d t \\[6pt]
+ \int_0^T \Big(  |\xi(t, 1)|^p +  |\rho(t, 0)|^p \Big) \, dt   \ge c. 
$$
The proof of the theorem is complete. 
\qed

\begin{remark}\label{rem-Dyn} \rm In the case $a \equiv 0$, one can show that the exponential stability for $1 \le p \le + \infty$ by noting that 
$$
\| \big(\rho(t+1, 0), \rho(t+1, 1) \big) \| \le \max\{|c_0|, |c_1| \} \| \big(\rho(t, 0), \rho(t, 1) \big) \|. 
$$ 
The conclusion then follows using the characteristics method. 
\end{remark}

\subsection{Proof of \Cref{thm-WD}} We first deal with the well-posedness of the system. The uniqueness follows as in the proof of \Cref{pro-WP} via the d'Alembert formula. The existence can be proved by approximation arguments. First deal with smooth solutions (with smooth $a$) using \Cref{thm-HD} and then pass to the limit. The details are omitted.

The proof of \eqref{thmHD-conclusion} is in the same spirit of \eqref{Decay-W-p}. However, instead of using \Cref{thm-H}, we apply \Cref{thm-HD}. The details are left to the reader. \qed

\begin{remark} \label{rem-Dynamic} \rm One can prove the well-posedness of \eqref{Sys-W} and \eqref{Dynamic} directly in $L^p$-framework. Nevertheless, to make the sense for the boundary condition, one needs to consider regular solutions and then $a$ is required to be more regular than just $L^\infty$. We here take advantage of the fact that such a system has a hyperbolic structure as given in \eqref{Sys-HD}. This give us the way to give sense for the solution by imposing the fact $\partial_t u, \partial_x u  \in C([0, 1]; L^p(0, T))$ for all $T > 0$. 
\end{remark}

\providecommand{\bysame}{\leavevmode\hbox to3em{\hrulefill}\thinspace}
\providecommand{\MR}{\relax\ifhmode\unskip\space\fi MR }
\providecommand{\MRhref}[2]{%
  \href{http://www.ams.org/mathscinet-getitem?mr=#1}{#2}
}
\providecommand{\href}[2]{#2}

\end{document}